\newcommand{\vol}[0]{\operatorname{vol}}
\newcommand{\norm}[1]{\lVert#1\rVert}
\newcommand{\R}[0]{\mathbb{R}}
\newcommand{\Z}[0]{\mathbb{Z}}
\newcommand{\mycomment}[1]{}
\theoremstyle{plain} 
\newtheorem{defn}{Definition}
\newtheorem{thm}{Theorem}
\newtheorem{prop}{Proposition}
\newtheorem{lem}{Lemma}
\newtheorem{cor}{Corollary}
\theoremstyle{remark}
\newtheorem{rem}{Remark}
\newtheorem{expl}{Example}
\newcommand{\shrinkmargins}[1]{
  \addtolength{\textheight}{#1\topmargin}
  \addtolength{\textheight}{#1\topmargin}
  \addtolength{\textwidth}{#1\oddsidemargin}
  \addtolength{\textwidth}{#1\evensidemargin}
  \addtolength{\topmargin}{-#1\topmargin}
  \addtolength{\oddsidemargin}{-#1\oddsidemargin}
 \addtolength{\evensidemargin}{-#1\evensidemargin}
  }
\newif\ifcomment
\newcommand{\ch}[1]{\ifcomment \textcolor{magenta}{Camilla: #1} \fi}
\title{A Design Criterion for the Rayleigh Fading Wiretap Channel Based on $\ell^1$-norm Theta Functions}
\author{Niklas Miller\thanks{This work has been supported by the Research Council of Finland (Grant \#351271, PI Camilla Hollanti).}}
\date{\today}
\begin{document}

\maketitle

\begin{abstract}
   We show that the correct decoding probability of an eavesdropper and error probability of a legitimate receiver in a Rayleigh fading wiretap channel with lattice coset coding are both upper bounded by the theta function in the $\ell^1$-norm of the dual code lattices. Motivated by these findings, we derive a closed form expression for the $\ell^1$-norm theta function of any sublattice of $\mathbb{Q}^n$ and its dual, and prove that the lattice $\mathbb{Z}^n$ minimizes the theta function among all scalings of $\mathbb{Z}^n$ along the coordinate axes. Furthermore, we develop a method to algorithmically find locally critical lattice packings of the $n$-dimensional cross-polytope. Using this method, we are able to construct a four-dimensional lattice having a packing density of $\Delta\approx 0.824858$ and kissing number 30, improving on the best known lattice packing density of the cross-polytope in dimension four.

\end{abstract}

\begin{keywords}$\ell^1$-norm theta functions, Cross-polytope packings, Wiretap codes, Wireless communications.\end{keywords}

\mycomment{
\ch{WR is mentioned here for the first time, need to develop the story and explain why we suddently jump to WR lattices. Should also connect to previous work on flatness factor and WR lattices for $l^2$ norm, either here or in the intro.}

\ch{There are definitions and properties here that would better stand out in an appropriate def/prop/lemma.. environment.}

\ch{Here should point out the analogue to the two-norm criterion, cf. \cite[eq. (7),(17)]{damir2021WR}.}  

\ch{Would suggest capital E, B as a subscript for the receiver, not to confuse with $P_e$ often used for error probability.}

\ch{Latin abbreviations should be in italic and `i.e.' and `e.g.' should be separated by commas on both sides.}

\ch{If refererring to a definitino, theorem etc by its number, then capitalize, otherwise don't. ``In the following theorem, we prove...'', vs ``In Theorem 3.2, we prove...''}

\ch{Figures need to be cleaned up (remove "out[]", put legend inside the figure, could try using semilogy to plot in log scale, might separate the curves better.}

\ch{Maybe we should still consider the story-line. I assume this generalized notion of WR lattices opens up the way for a lot of further theoretical research as well as finding good/optimal constructions. On the other hand, they came about now from the wiretap problem. If one chooses the "chronological" story (wiretap $\rightarrow$ $l^1$ norm $\rightarrow$ theta $\rightarrow$  generalized WR), I'm afraid part of the story might get lost. Another option is to propose the generalized WR notion first, then introduce an application to wiretap channels. Of course, now this WR part is short, and we want to submit soon. So perhaps here the chronological story as you suggested, and then we can think if one could do something more extensive about gen WR lattices for another paper.}

\ch{Could make this story more in line with the $l^2$-norm flatness factor criterion, by calling the G-function the ``$l^1$-flatness factor'' and also showing it (essentially)  bounds(?) the eavesdroppers information in addition to the ECDP. Similarly to Luzzi--Stehle--Belfiore semantic security paper and Damir et al. WR papers.}

\ch{The curves look like they could use a bit more simulation rounds at the low-SNR end  to become smooth. Can the y-axis be made such that the asymptote becomes clearly visible?}

\ch{Below is CDP, while earlier was ECDP, make consisent.}

\ch{I added a section for the simulation results and threw the figures there. Need to add explanations of the simulation setup and the used lattices. Suggested a title.}

\ch{TO DO (limit to what you can finish before Christmas):\\
-Add a bib file and all related references there, and resp. citations in the paper. \\
-Add 6-dim and maybe 8-dim lattices. Add comparison simulations wrt codes that are construction from the flatness factor criterion ($l^2$ norm).  \\
-Low SNR: just optimize for min $l^1$ norm. Study automorphism groups that preserve this. \\
-High SNR: should try to simultaneously optimize for Bob in terms of $pd_{min}$ (cf. our old papers), and wrt $l^1$. This is now harder than in the $l^2$-case as neither of these is rotation invariant. Can at least mention local diversity (so locally non-vanishing $pd_{min}$. \\
-As the rotation is not $l^1$ invariant, it might be interesting to derive a theta function approximation to easily compute the whole theta series (which might give better approximations for the probability, would be interesting to see).
-consider the error term\\
-can you prove that being $l^1$-well-rounded is beneficial? --> yes! Also have to be dense-ish in terms of being denser than ortho lattice in the 2-norm. 
}
\ch{Concept/term definitions rather in italic than in quotation marks. If you prefer quotation marks, use smart quotes ``...''.}
\ch{Here, could add the slightly later notion of flatness factor (involving the theta series), which essentially replaced the inverse norm sum for wiretap lattice code design.} 
\ch{About the goodness of approximating any lattice with a construction A lattice: would be nice to say something precise about the tightness in terms of $m$. 
}
\ch{The main problem with this inverse norm sum is whether the approximation is relevant in a typical wiretap situation, where Eve's SNR is low wrt Bob's. A more relevant comparison would probably be to the ECDP approximation from where the 2-norm flatness factor criterion is obtained from.}
\ch{The story seems to be cut short here. Explain what one can conclude from this (in a relevant gamma range in particular).}
\ch{Mention here the case where it is known that the minimizer is not the densest packing.}

\ch{Again, this is in perfect analogue to the 2-norm case, where this was also connected to WR lattices.}

\ch{Use semilogy for the plots to put y-axis in log scale. Now one cannot really see very well how these compare close to the lower asymptote. Also customary to have the gamma in dB ($10\log_{10}(\gamma)$). This would make it easier to match these figures w the actual simulation figures as well.}
}

\section{Introduction}

Recently, Belfiore and Oggier \cite{belfiore2011lattice} analyzed the correct decoding probability of an eavesdropper in a Rayleigh fading wiretap channel with lattice coset coding. They further derived an approximation for the probability, called the \emph{inverse norm sum} approximation. From this, various heuristics were developed relating different parameters of the code lattices to the probability estimate. However, there is still a lack of an explicit answer to the question of which lattices, or classes of lattices, optimize these expressions. It is therefore the purpose of this paper to show that both the correct decoding probability of the eavesdropper, and error probability of the legitimate receiver, are upper bounded by the theta function in the $\ell^1$-norm of the dual code lattices. This new design criterion, together with the fact that the $\ell^1$-norm theta function of a lattice is dictated by the cross-polytope packing density for small $q$, leads to the search for dense lattice packings of the cross-polytope. For this, a program using nonlinear optimization is developed in Section \ref{sec:cross-polytope-program}.

Sol\'{e} \cite{sole1995counting} studied the $\ell^1$-theta function (which he calls the $\nu$-function), and derived a closed form expression for the theta function of lattices constructed from binary codes via Construction A. We extend his results by deriving a formula for the theta function of any sublattice $\Lambda$ of $\Z^n$, and its dual. We also prove an $\ell^1$-analogue of Montgomery's result \cite[Theorem 2]{montgomery1988minimal} that the theta function of $\Z^n$ is smaller than the theta function of $\alpha_1\Z\oplus\cdots\oplus \alpha_n\Z$, where $\alpha_1\cdots \alpha_n=1$, for all $q$.

\subsection{Number- and coding-theoretic tools}

In this article, a lattice $\Lambda$ is the $\Z$-span of a basis $\{b_1,\dots, b_n\}$ of $\R^n$, i.e. $\Lambda=\{\sum_{i=1}^n c_i b_i:c_i\in \Z\}$. The matrix $B$ with rows $b_1,\dots, b_n$ is the generator matrix of $\Lambda$. The volume of any fundamental domain of $\R^n$ with respect to the translation action of $\Lambda$ is denoted by $\text{vol}(\Lambda)$, and can be computed by $\text{vol}(\Lambda)=|\det(B)|$ for any generator matrix $B$ of $\Lambda$. The dual lattice of $\Lambda$ is $\Lambda^*=\{y\in \R^n:x\cdot y\in\Z\text{ for all $x\in \Lambda$}\}$ and it is generated by $(B^{-1})^{T}$ where $B$ is any generator matrix of $\Lambda$. Let $p\geq 1$ and let $\norm{x}_p:=(\sum_{i=1}^n |x_i|^{p})^{\frac{1}{p}}$ denote the usual $p$-norm in $\R^n$. Define the theta function in the $p$-norm of $\Lambda$ as the generating function
$$\Theta_{\Lambda}^{p}(q)=\sum_{x\in \Lambda} q^{\norm{x}_p^{p}}.$$
$\Theta_{\Lambda}^2(q)$ is the classical theta function, for which the substitution $q=e^{\pi i z}$ makes the theta function a holomorphic function in the upper half-plane $\mathcal{H}=\{z\in\mathbb{C}:\Im(z)>0\}$. Note that the series $\Theta_{\Lambda}^p(q)$ converges for all real $0\leq q<1$, due to the equivalence of norms in $\R^n$ and since $\Theta_{\Lambda}^2(q)$ converges for $0\leq q<1$. We define the Fourier-transform of an $L^1(\R^n)$ function $f:\R^n\to \R$ by (note the sign convention) $$\widehat{f}(\xi)=\int_{\R^n} f(x) e^{2 \pi i x\cdot \xi}d x.$$

\begin{thm}{(Poisson summation formula)}
\label{thm:Poisson}
    Let $f:\R^n\to\mathbb{C}$ be a Schwartz function, and $\Lambda\subseteq\R^n$ a lattice. Then
    $$\sum_{x\in \Lambda} f(x)=\frac{1}{\vol{\Lambda}}\sum_{y\in\Lambda^{*}}\widehat{f}(y).$$
\end{thm}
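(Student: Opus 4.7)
The plan is to prove the formula by averaging $f$ over translates by $\Lambda$ and then expanding the resulting periodic function in a Fourier series on the torus $\R^n/\Lambda$. Concretely, I would introduce the periodization
$$F(x)=\sum_{v\in \Lambda} f(x+v),$$
which is $\Lambda$-periodic. Since $f$ is Schwartz, this sum converges absolutely and uniformly on compact sets, so $F$ is a well-defined smooth function on $\R^n$ that descends to a smooth function on the torus $\R^n/\Lambda$. The value at $x=0$ is $F(0)=\sum_{x\in\Lambda}f(x)$, which is the left-hand side of the identity I want.

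Next I would expand $F$ in its Fourier series on $\R^n/\Lambda$. The characters of this torus are precisely the exponentials $e^{-2\pi i x\cdot y}$ with $y\in\Lambda^*$, since the defining condition $x\cdot y\in\Z$ for $x\in\Lambda$ makes them well defined on $\R^n/\Lambda$, and they form an orthogonal basis of $L^2(\R^n/\Lambda)$. Writing $F(x)=\sum_{y\in\Lambda^*}c_y e^{-2\pi i x\cdot y}$, I would compute
$$c_y=\frac{1}{\vol{\Lambda}}\int_D F(x)e^{2\pi i x\cdot y}\,dx,$$
where $D$ is any fundamental domain for $\Lambda$. Unfolding the sum defining $F$ via the change of variables $u=x+v$ on each translate $D+v$ turns the integral over $D$ together with the sum over $v\in\Lambda$ into a single integral over $\R^n$, producing $c_y=\frac{1}{\vol{\Lambda}}\widehat{f}(y)$; the key observation that makes unfolding work cleanly is that $e^{-2\pi i v\cdot y}=1$ whenever $v\in\Lambda$ and $y\in\Lambda^*$.

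Setting $x=0$ in the Fourier expansion then yields
$$\sum_{x\in\Lambda}f(x)=F(0)=\sum_{y\in\Lambda^*}c_y=\frac{1}{\vol{\Lambda}}\sum_{y\in\Lambda^*}\widehat{f}(y),$$
which is the desired identity. The main technical point I would need to be careful about is the justification of the pointwise equality of the Fourier series at $x=0$; the interchange of sum and integral in the computation of $c_y$ is immediate from the Schwartz decay, and since $\widehat{f}$ is also Schwartz, the series $\sum_{y\in\Lambda^*}\widehat{f}(y)$ converges absolutely, so the Fourier series of $F$ converges absolutely and uniformly and in particular to $F$ pointwise by standard $n$-dimensional Fourier analysis on the torus. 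This is the only genuine obstacle, and it is handled entirely by the Schwartz hypothesis; everything else is a bookkeeping exercise with the chosen sign convention for $\widehat{f}$.
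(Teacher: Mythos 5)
The paper states the Poisson summation formula as a classical fact and does not prove it, so there is no in-paper argument to compare against. Your periodization proof is the standard one and is correct: the unfolding computation $c_y=\frac{1}{\vol{\Lambda}}\widehat{f}(y)$ is consistent with the paper's (nonstandard, explicitly flagged) sign convention $\widehat{f}(\xi)=\int_{\R^n}f(x)e^{2\pi i x\cdot\xi}\,dx$, since you pair the characters $e^{-2\pi i x\cdot y}$ with coefficients computed against $e^{+2\pi i x\cdot y}$, and the identity $e^{-2\pi i v\cdot y}=1$ for $v\in\Lambda$, $y\in\Lambda^*$ is exactly the definition of $\Lambda^*$ used in the paper. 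The two analytic points you isolate are the right ones and are both settled by the Schwartz hypothesis: smoothness of the periodization $F$ (the termwise differentiated series also converge uniformly on compacta because partial derivatives of Schwartz functions are Schwartz), and pointwise recovery of $F(0)$ from its Fourier series, which follows since $\widehat{f}$ is Schwartz, hence the series converges absolutely and uniformly to a continuous function with the same Fourier coefficients as the smooth function $F$, forcing equality.
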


Recall that a Schwartz function is a $C^\infty(\R^n,\mathbb{C})$ function which decays faster in absolute value than any inverse polynomial as $|x|\to\infty$, and the same is true for all its partial derivatives. Also, the Fourier-transform is an automorphism on the vector space of Schwartz functions.

A linear $q$-ary ($q\geq 2$) code $C$ of length $n$ is a $\Z$-submodule of $(\Z/q\Z)^n$. The dual of $C$ is defined as $C^\perp=\{y\in (\Z/q\Z)^n:y\cdot x\equiv0\pmod{q}\text{ for all $x\in C$} \}$. Weight enumerators capture the distribution of weights in a code. Two important such weight enumerators are the Hamming and symmetric weight enumerators. 

\begin{defn}
    Let $C$ be a linear $q$-ary code of length $n$. The symmetric weight enumerator of $C$ is defined as the homogeneous polynomial of degree $n$ in $\lfloor\frac{q}{2}\rfloor+1$ variables given by
    $$swe_C(x_0,x_1,\dots,x_{\lfloor\frac{q}{2}\rfloor})=\sum_{c\in C}x_0^{n_0(c)}x_1^{n_1(c)}\cdots x_{\lfloor\frac{q}{2}\rfloor}^{n_{\lfloor\frac{q}{2}\rfloor}(c)},$$
    where $n_i(c)$ is the number of coordinates of $c$ which are congruent to $\pm i\pmod{q}$. The Hamming weight enumerator of $C$ is the homogeneous polynomial of degree $n$ in two variables defined as
    $$W_C(x,y)=\sum_{c\in C} x^{n-|c|}y^{|c|},$$
    where $|c|=|\{i:c_i\neq 0\}|$ is the Hamming weight of $c\in C$.
\end{defn}

For $q=2,3$, the symmetric weight enumerator reduces to the Hamming weight enumerator. The weight enumerators of a code and its dual are strongly linked via MacWilliams' identities. We will need the following general variant of MacWilliams' identities, due to Zhu \cite{zhu2003symmetrized}.

\begin{lem}\label{lem:Shixin}
    Let $C$ be a linear $q$-ary code of length $n$. Let $t=\lfloor\frac{q}{2}\rfloor$. Then
    $${swe}_{C^\perp}(x_0,\dots, x_t)=\frac{1}{|C|}{swe}_{C}(y_0,\dots, y_t),$$
    where
    \begin{align*}
        y_i=\begin{cases}
            x_0+2\sum_{j=1}^{t-1}\cos\left(\frac{2\pi ij}{q}\right)x_j+\cos(\pi i)x_t,&\text{if $q$ is even,}\\
            x_0+2\sum_{j=1}^t\cos\left(\frac{2\pi ij}{q}\right)x_j,&\text{if $q$ is odd,}
        \end{cases}
    \end{align*}
    for all $i=0,\dots, t$.
\end{lem}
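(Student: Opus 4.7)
The plan is to deduce the statement from Poisson summation on the finite abelian group $G=(\Z/q\Z)^n$ applied to a suitable multiplicative test function, which is the standard route to MacWilliams-type identities.

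First, I would encode $swe_C$ as the sum of a product function over $C$. Define $g:\Z/q\Z\to\R[x_0,\dots,x_t]$ by $g(a)=x_j$ whenever $a\equiv\pm j\pmod q$ with $0\le j\le t$; note that $g(a)=g(-a)$. Then for $f(c_1,\dots,c_n):=\prod_{i=1}^n g(c_i)$ one has $swe_C(x_0,\dots,x_t)=\sum_{c\in C}f(c)$, and likewise for $C^\perp$.

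Next, apply the finite-group Poisson summation formula: for any function $f$ on $G$ and any subgroup $C\le G$,
$$\sum_{c'\in C^\perp}f(c')=\frac{1}{|C|}\sum_{c\in C}\widehat{f}(c),\qquad \widehat{f}(y):=\sum_{x\in G}f(x)\,e^{-2\pi i\,x\cdot y/q},$$
where $C^\perp$ is the annihilator under the additive-character pairing of $G$, which coincides with the code-theoretic dual. Since $f$ is a product and the character factors, $\widehat{f}(y)=\prod_i \widehat{g}(y_i)$, so the whole problem reduces to computing $\widehat{g}(i)$ for $0\le i\le t$.

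Third, I would compute $\widehat{g}(i)=\sum_{a=0}^{q-1}g(a)\,e^{-2\pi i a i/q}$ by grouping $\Z/q\Z$ into orbits of $a\mapsto -a$. The fixed point $a=0$ contributes $x_0$. A free orbit $\{j,q-j\}$ with $1\le j$ contributes $x_j\bigl(e^{-2\pi i j i/q}+e^{2\pi i j i/q}\bigr)=2\cos(2\pi ij/q)\,x_j$. When $q$ is even, $a=q/2$ is a second fixed point contributing $e^{-\pi i\,i}\,x_t=\cos(\pi i)\,x_t$, and the free orbits run over $1\le j\le t-1$; when $q$ is odd there is no such extra fixed point and the free orbits run over $1\le j\le t$. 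This yields exactly the expressions for $y_i$ in the statement, so $\widehat{g}(i)=y_i$.

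Finally, I would observe that $\widehat{g}$ inherits the symmetry $\widehat{g}(i)=\widehat{g}(-i)$ (each displayed expression depends on $i$ only through cosines), so for any $c\in C$ the product $\prod_{i=1}^n\widehat{g}(c_i)$ depends only on the symmetric composition of $c$, giving $\prod_i \widehat{g}(c_i)=y_0^{n_0(c)}\cdots y_t^{n_t(c)}$. Summing over $c\in C$ identifies the right-hand side of Poisson summation with $\frac{1}{|C|}swe_C(y_0,\dots,y_t)$, proving the lemma. The only delicate point, and where I would be most careful, is the even/odd $q$ bookkeeping in the orbit decomposition of $\Z/q\Z$ under negation, since the apparent asymmetry between the two cases in the stated formula is precisely accounted for by the extra fixed point $q/2$ when $q$ is even.
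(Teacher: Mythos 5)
Your proof is correct. Note that the paper does not prove this lemma at all: it is quoted as a known result from Zhu's paper on symmetrized weight enumerators, so there is no in-paper argument to compare against. Your derivation --- encoding $swe_C$ as $\sum_{c\in C}\prod_i g(c_i)$, applying discrete Poisson summation over $(\Z/q\Z)^n$ with the annihilator identified with the code dual, and evaluating $\widehat{g}(i)$ by decomposing $\Z/q\Z$ into orbits of $a\mapsto -a$ (with the extra fixed point $q/2$ accounting for the even/odd case split) --- is the standard character-sum route to MacWilliams-type identities and is a sound, self-contained substitute for the citation.
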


There is a one-to-one correspondence between sublattices of $\Z^n$ containing $q\Z^n$ and linear $q$-ary codes via Construction A, which associates to a lattice $\Lambda$ satisfying $q\Z^n\subseteq \Lambda\subseteq \Z^n$ the code $C=\Lambda/q\Z^n$. The lattice obtained from a linear $q$-ary code $C$ via Construction A will be denoted $\Lambda_{A(C)}=\{x\in \Z^n:x\equiv c\pmod{q}\in C\}$. Any sublattice $\Lambda\subseteq \Z^n$ is a $q$-ary lattice for $q=\text{vol}(\Lambda)$. In fact, if $R$ is a principal ideal domain and $M_1\subseteq M_2$ are two free $R$-modules of rank $n$, then we can find bases $b_1,\dots,b_n$ of $M_1$ and $b_1',\dots, b_n'$ of $M_2$ such that $b_i=\alpha_i b_i'$, where $\alpha_i\in R$ $(i=1,\dots,n)$ are the invariant factors of the quotient module $M_2/M_1$. They satisfy $\alpha_i\mid \alpha_{i+1}$ for $i=1,2,\dots,n-1$. In the case that we are interested in, $\Lambda\subseteq \Z^n$, we may write $\Lambda=\bigoplus_{i=1}^n \alpha_i b_i'\Z$ for some basis $\{b_1',\dots,b_n'\}$ of $\Z^n$. It follows that $\Lambda$ is an $\alpha_n$-ary lattice. The invariant factors can in practice be found by computing the Smith normal form of a generator matrix of $\Lambda$. Everything here applies to rational lattices $\Lambda\subseteq \mathbb{Q}^n$ as well, since after scaling they are sublattices of $\Z^n$.

\subsection{Channel model}
\label{sec:channel_model}

A wiretap channel, introduced by Wyner \cite{wyner1975wire}, is a channel model where two parties, A and B, communicate over a discrete and memoryless noisy channel under the presence of an eavesdropper, E. We consider a wireless transmission system, in which there is perfect channel state information (CSI) at the receiver. This allows one to consider a real channel model as follows. The transmitted symbols $x\in\R^n$ are codewords in a finite subset $\mathcal{S}$ of $\R^n$, and the received vector is given by
\begin{align*}
    r &= \text{diag}(\alpha_1,\dots,\alpha_n) x + n,
\end{align*}
where $n$ is a normal random variable with mean $0$ and variance $\sigma_b^2$ (intended receiver, B) or $\sigma_e^2$ (unwanted receiver, E). It is assumed that the channel quality of B is superior to that of E, i.e. $\sigma_b^2<\sigma_e^2$. In the additive white Gaussian noise channel (AWGN), $\alpha_i=1$ for all $i$, while in the Rayleigh fading channel, the $\alpha_i$ are independent Rayleigh distributed variables with parameter $\sigma_{h,b}$ and $\sigma_{h,e}$, respectively. The independence assumption relies on the fact that the sent symbols are interleaved before entering the channel. Define $\gamma_e:=\frac{\sigma_{h,e}^2}{\sigma_e^2}$, and similarly $\gamma_b:=\frac{\sigma_{h,b}^2}{\sigma_b^2}$; the average SNR of E respectively B.

We consider lattice coset coding as in \cite{ozarow1984wire}, \cite{wyner1975wire}, where the signal set is a finite subset $\mathcal{S}$ of a lattice $\Lambda_b$ and we choose a sublattice $\Lambda_e\subseteq\Lambda_b$ and identify the set of messages with the elements of $\Lambda_b/\Lambda_e$: the number of possible messages is $\frac{\vol{\Lambda_e}}{\vol{\Lambda_b}}$. The sender chooses a coset representative uniformly at random. The objective is to choose the pair $(\Lambda_b,\Lambda_e)$ in such a way that many different objectives are simultaneously achieved: the information rate is maximized, while the mutual information between the received signal of E and the sent message is minimized, and the correct decoding probability of E is minimized. In this article, we consider the correct decoding probabilities of E and B. Let ECDP denote the correct decoding probability of E.

For the AWGN-channel with lattice coset coding, the ECDP is upper bounded by a constant times the theta function $\Theta_{\Lambda_e}^2\left(\exp\left(-\frac{1}{2\sigma_e^2}\right)\right)$  (in the $\ell^2$-norm) of $\Lambda_e$, as was proved in \cite{oggier2015lattice}.  Furthermore, it is shown in \cite{ling2014semantically} that the mutual information $I(M,R_e)=H(M)-H(M\mid R_e)$, where $R_e$ and $M$ denote the random variables representing the signal received by E and the message sent by A, respectively, and $H(\cdot)$ denotes entropy, is upper bounded by 
$$I(M,R_e)\leq 8\varepsilon_n(n \rho-\log(8\epsilon_n)),$$
where $\rho$ is the total rate of the code and $\epsilon_n=\frac{\vol{\Lambda_e}}{\left(2\pi \sigma_e^2\right)^{\frac{n}{2}}}\Theta_{\Lambda_e}^2\left(\exp(-\frac{1}{2\sigma_e^2})\right)-1$, sometimes called the flatness-factor of $\Lambda_e$. Therefore, both the minimization of ECDP and mutual information lead to the same objective of minimizing the theta function of $\Lambda_e$.

For fading channels (with arbitrarily distributed fading coefficients), one can compute the ECDP by averaging over all possible realizations of the fading. For the specific case of
the Rayleigh fading channel with lattice coset coding, Belfiore and Oggier \cite{belfiore2011lattice} derived the following expression for ECDP:

\begin{equation}
\label{eq:CDP}
    P_{c,e}\approx \left(\frac{\gamma_e}{4}\right)^{\frac{n}{2}}\vol(\Lambda_b)\sum_{x\in \Lambda_e}\prod_{i=1}^n \frac{1}{(1+\gamma_e x_i^2)^{\frac{3}{2}}}.
\end{equation}

Furthermore, they made the assumption that $\gamma_e$ is sufficiently large so that (\ref{eq:CDP}) is approximately

\begin{align}\label{eq:inverse_norm_sum}
       P_{c,e}\approx\left(\frac{1}{4\gamma_e^2}\right)^{\frac{n}{2}}\vol(\Lambda_b)\sum_{x\in\Lambda_e\setminus\{0\}}\prod_{i=1}^n\frac{1}{|x_i|^3},
\end{align}
assuming $\Lambda_e$ has full diversity, i.e. does not have lattice points $x\in \Lambda_e\setminus\{0\}$ with $x_i=0$ for some $i=1,\dots,n$. The above is called the "inverse norm sum" approximation. The above approximation tends to infinity as $\gamma_e\to 0$, so it cannot properly describe the decoding probability as the channel quality becomes poor. Note also the lack of convergence of the above sum: e.g. if $\Lambda_e$ is a lattice constructed via the canonical embedding from the ring of integers $\mathcal{O}_K$ of a totally real algebraic number field $K\neq \mathbb{Q}$, then the sum is $\sum_{x\in\mathcal{O}_K\setminus\{0\}} \frac{1}{|N_{K/\mathbb{Q}}(x)|^3}$, where $N_{K/\mathbb{Q}}$ is the field norm, which diverges since there are infinitely many units in $\mathcal{O}_K$. Of course the exact sums are finite and therefore convergent. Let us mention that the mutual information of the signal received by E and the message sent by A have been analyzed in fading channels e.g. in \cite{mirghasemi2015lattice}, \cite{damir2021WR} and \cite{luzzi2016almost}.

Let us now consider the legitimate receiver's decoding \emph{error} probability in a Rayleigh fading channel with lattice coding. The following is a classic upper bound, derived in \cite{boutros1996good}:

\begin{equation}
    \label{eq:Bob_correct}
    P_{e,b}\leq \frac{1}{2} \sum_{x\in \Lambda_b\setminus\{0\}}\prod_{i=1}^n \frac{1}{1+\frac{\gamma_b}{4} x_i^2}.
\end{equation}

Above the letter $e$ stands for the word "error". Note that the above bound is also relevant in the situation of lattice coset coding: (\ref{eq:Bob_correct}) describes the probability that the received signal $r=\text{diag}(\alpha_1,\dots,\alpha_n)x+n$ of B lies closer to some lattice point $y\in\Lambda_b$ different from $x$. Now, in the situation of lattice coset coding, the point $y\in\Lambda_b$ may lie in the same coset as $x$ modulo $\Lambda_e$, whence there is no decoding error, but this probability is negligible if the channel quality of B is good, and the index $[\Lambda_b:\Lambda_e]$ is large enough so that the closest lattice points to $x\in\Lambda_b$ in $\Lambda_b$ all lie in different cosets modulo $\Lambda_e$.

If B has a good channel quality ($\gamma_b\to \infty$), (\ref{eq:Bob_correct}) is approximated by a factor times a sum of the form $\sum_{x\in\Lambda_b\setminus\{0\}}\prod_{i=1}^n \frac{1}{x_i^2}$, which leads to the heuristic of choosing $\Lambda_b$ to have a large minimum product distance $d(x)=\prod_{i=1}^n |x_i|$ and large diversity, defined as the minimum number of non-zero coordinates in any non-zero lattice vector. We will show in section \ref{sec:probability_estimates} that (\ref{eq:Bob_correct}) can be expressed exactly in terms of the $\ell^1$-norm theta function of the dual lattice $\Lambda_b^*$, which leads to the objective of minimizing the $\ell^1$-norm theta function of $\Lambda_b^*$. Note that this has the advantage of not having to assume that $\gamma_b$ is large, unlike in the approximation above. 

\section{An upper bound from the $\ell^1$-norm theta function}
\label{sec:probability_estimates}

Consider the ECDP (\ref{eq:CDP}) in a Rayleigh fading channel with lattice coset coding. The minimization of that expression is equivalent to minimizing, for a fixed $\gamma>0$, the function

\begin{equation}
\label{eq:obj}
    F(\Lambda;\gamma) := \sum_{x\in \Lambda} \prod_{i=1}^n \frac{1}{(1+\gamma x_i^2)^{\frac{3}{2}}}
\end{equation}

over the space of lattices $\Lambda\subseteq\R^n$ of fixed volume. Consider the Taylor-series of $x\mapsto (1+\gamma x^2)^{\frac{3}{2}}$ around $x_0=0$: $$(1+\gamma x^2)^{\frac{3}{2}}=1+\frac{3 \gamma  x^2}{2}+\frac{3 \gamma ^2 x^4}{8}+\mathcal{O}(x^6).$$
Bernoulli's inequality gives $(1+\gamma x^2)^{\frac{3}{2}}\geq 1+\frac{3\gamma x^2}{2}$ for all $x\in \R$, and $\gamma>0$, which gives the bound


    


\begin{align}
\label{inequality:FG}
    F(\Lambda;\gamma) &= \sum_{x\in \Lambda} \prod_{i=1}^n \frac{1}{(1+\gamma x_i^2)^{\frac{3}{2}}}\leq \sum_{x\in \Lambda} \prod_{i=1}^n \frac{1}{1+\frac{3}{2}\gamma x_i^2}=:G(\Lambda;\gamma).
\end{align}

Note that $G(\Lambda;\gamma)$ has the same shape as the error probability of B in (\ref{eq:Bob_correct}). Furthermore, the series $G(\Lambda;\gamma)$ converges for any fixed $\gamma>0$ by Proposition \ref{prop:2} below and the convergence of the $\ell^p$-norm theta functions for $0\leq q<1$. In fact, $G(\Lambda;\gamma)$ is asymptotic to $\frac{1}{\vol{\Lambda}}\left(\frac{\pi\sqrt{2}}{\sqrt{3\gamma}}\right)^n$ as $\gamma\to 0$, which gives an upper bound of $\left(\frac{\pi}{\sqrt{6}}\right)^n \frac{\vol{\Lambda_b}}{\vol{\Lambda_e}}\approx 1.28255^n \frac{\vol{\Lambda_b}}{\vol{\Lambda_e}}$ on the probability estimate (\ref{eq:CDP}) in the limit $\gamma\to0$. This differs slightly from the uniform guess probability $\frac{\vol{\Lambda_b}}{\vol{\Lambda_e}}$, but is at least finite, contrary to the inverse norm sum approximation. Next, we want to relate the function $G(\Lambda;\gamma)$ to the $\ell^1$-norm theta function of $\Lambda^*$. This is done by taking the Fourier transform of the function $x\mapsto\frac{1}{1+sx^2}$.

\begin{lem}
\label{lem:1}
    The Fourier transform of $f:\R\to \R, \ f(x)=\frac{1}{1+s x^2}$, where $s>0$ is real, is given by $\widehat{f}(\xi)=\frac{\pi}{\sqrt{s}} e^{-2 \pi \frac{|\xi|}{\sqrt{s}}}$.
\end{lem}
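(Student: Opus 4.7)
The plan is to compute the integral $\widehat{f}(\xi)=\int_{-\infty}^{\infty}\frac{e^{2\pi i x \xi}}{1+sx^2}\,dx$ directly by contour integration. First I would reduce to the case $s=1$ by the substitution $y=\sqrt{s}\,x$, which gives
\[
\widehat{f}(\xi)=\frac{1}{\sqrt{s}}\int_{-\infty}^{\infty}\frac{e^{2\pi i (\xi/\sqrt{s}) y}}{1+y^2}\,dy,
\]
so it suffices to establish that the Fourier transform of $g(y)=1/(1+y^2)$ is $\widehat{g}(\eta)=\pi e^{-2\pi|\eta|}$, and then specialize with $\eta=\xi/\sqrt{s}$ and multiply by $1/\sqrt{s}$.

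For the base case, I would treat $\eta>0$ and $\eta<0$ separately. The integrand $h(z)=e^{2\pi i z \eta}/(1+z^2)$ extends meromorphically to $\mathbb{C}$ with simple poles at $z=\pm i$. For $\eta>0$, the factor $|e^{2\pi i z \eta}|=e^{-2\pi\eta\,\mathrm{Im}(z)}$ decays in the upper half-plane, so I close the contour along a large semicircle in $\mathrm{Im}(z)>0$; the semicircular contribution vanishes by Jordan's lemma (or a direct $ML$-estimate), and the residue theorem yields
\[
\widehat{g}(\eta)=2\pi i\,\mathrm{Res}_{z=i}\frac{e^{2\pi i z\eta}}{(z-i)(z+i)}=2\pi i\cdot\frac{e^{-2\pi\eta}}{2i}=\pi e^{-2\pi\eta}.
\]
For $\eta<0$, I close in the lower half-plane instead (with a clockwise orientation, hence an extra minus sign), pick up the residue at $z=-i$, and obtain $\pi e^{2\pi\eta}=\pi e^{-2\pi|\eta|}$. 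The case $\eta=0$ is the elementary $\int \frac{dy}{1+y^2}=\pi$, which matches continuously.

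Putting these together gives $\widehat{g}(\eta)=\pi e^{-2\pi|\eta|}$, and combined with the scaling step above this yields $\widehat{f}(\xi)=\frac{\pi}{\sqrt{s}}e^{-2\pi|\xi|/\sqrt{s}}$, as claimed. There is no real obstacle here; the only thing to be careful about is the sign convention $\widehat{f}(\xi)=\int f(x)e^{2\pi i x\cdot\xi}dx$ used in the paper, which is exactly what forces the contour to be closed in the upper half-plane for $\xi>0$ and gives the pole at $z=i/\sqrt{s}$ (rather than its conjugate) as the one contributing.
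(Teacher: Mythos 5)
Your proof is correct, but it takes a different route from the paper's. You compute $\widehat{g}(\eta)$ for $g(y)=1/(1+y^2)$ head-on by contour integration: meromorphic continuation, poles at $\pm i$, closing the contour in the half-plane where $e^{2\pi i z\eta}$ decays, and the residue theorem; the $\eta=0$ case and the scaling reduction are handled correctly, and your remark about the sign convention forcing the upper half-plane for $\xi>0$ is exactly the point one has to watch. The paper instead goes in the opposite direction: it computes the \emph{easy} Fourier transform first, namely that of $\pi e^{-2\pi|x|}$, which splits into two elementary convergent exponential integrals and needs no complex analysis at all, obtaining $1/(1+\xi^2)$; it then invokes the Fourier inversion formula (both functions are even, integrable, and continuous, so inversion applies and the sign of the exponent is immaterial) to conclude that the transform of $1/(1+x^2)$ is $\pi e^{-2\pi|\xi|}$, and finally applies the same scaling you use. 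The trade-off is the usual one: your argument is self-contained modulo the residue theorem and an $ML$-estimate on the large semicircle (Jordan's lemma is not even needed here since the integrand decays like $|z|^{-2}$), while the paper's argument is entirely elementary but leans on the inversion theorem as a black box. Both are standard and both are complete; no gap on your side.
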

\begin{proof}
The Fourier transform of $g(x)=\pi e^{-2\pi |x|}$ is given by
    \begin{align*}
         \widehat{g}(\xi) &= \int_{-\infty}^\infty g(x) e^{2 \pi i \xi x} d x = \pi \left(\int_{-\infty}^0e^{2\pi x(i\xi+1)}dx+\int_{0}^\infty e^{2\pi x(i\xi-1)}d x\right) \\
             &= \pi \left(\frac{1}{2\pi(1+i\xi)}+\frac{1}{2\pi(1-i\xi)}\right)= \frac{1}{1+\xi^2}.
    \end{align*}
    So by the Fourier inversion formula, the Fourier transform of $h(x)=\frac{1}{1+x^2}$ is $\widehat{h}(\xi)=\pi e^{-2\pi |\xi|}$. Using the scaling property, we obtain
    $$\widehat{f}(\xi)=\widehat{h(\sqrt{s}x)}(\xi)=\frac{1}{\sqrt{s}}\widehat{h}(\frac{\xi}{\sqrt{s}})=\frac{\pi}{\sqrt{s}} e^{-2 \pi \frac{|\xi|}{\sqrt{s}}}.$$
\end{proof}

\begin{lem}
\label{lem:2}
    Define the function $f:\R^n\to \R$, $f(x)=\prod_{i=1}^n \frac{1}{1+s x_i^2}$ where $s>0$ is a real number. Then the Fourier transform of $f$ is given by $$\widehat{f}(\xi)=\left(\frac{\pi}{\sqrt{s}}\right)^n e^{-\frac{2\pi}{\sqrt{s}}\norm{\xi}_1}.$$
\end{lem}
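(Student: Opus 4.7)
The plan is to exploit the tensor-product structure of $f$. Write $f(x) = \prod_{i=1}^n f_1(x_i)$, where $f_1(t) = \frac{1}{1+st^2}$ is the one-variable function treated in Lemma \ref{lem:1}. Since $f_1 \in L^1(\R)$ (as $s>0$ ensures polynomial decay at rate $1/t^2$), we have $f \in L^1(\R^n)$ by Fubini, and so its Fourier transform is well-defined and given by the absolutely convergent integral.

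The key step is then to factor the defining integral along the coordinate axes. Concretely, I would write
\begin{align*}
\widehat{f}(\xi) &= \int_{\R^n} \prod_{i=1}^n f_1(x_i) \cdot e^{2\pi i \, x\cdot \xi}\, dx \\
                 &= \int_{\R^n} \prod_{i=1}^n f_1(x_i) e^{2\pi i x_i \xi_i}\, dx_1\cdots dx_n \\
                 &= \prod_{i=1}^n \int_{\R} f_1(x_i) e^{2\pi i x_i \xi_i}\, dx_i \;=\; \prod_{i=1}^n \widehat{f_1}(\xi_i),
\end{align*}
where the second equality uses $e^{2\pi i x\cdot\xi} = \prod_i e^{2\pi i x_i \xi_i}$ and the third is Fubini's theorem, justified by absolute integrability.

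Now I apply Lemma \ref{lem:1} to each factor, which gives $\widehat{f_1}(\xi_i) = \frac{\pi}{\sqrt{s}} e^{-2\pi |\xi_i|/\sqrt{s}}$. Pulling the constant out of the product and using $\sum_{i=1}^n |\xi_i| = \norm{\xi}_1$ (by the very definition of the $\ell^1$-norm) yields
\[
\widehat{f}(\xi) = \left(\frac{\pi}{\sqrt{s}}\right)^n \exp\!\left(-\frac{2\pi}{\sqrt{s}} \sum_{i=1}^n |\xi_i|\right) = \left(\frac{\pi}{\sqrt{s}}\right)^n e^{-\frac{2\pi}{\sqrt{s}} \norm{\xi}_1},
\]
as required. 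There is no real obstacle here; the only subtlety worth stating explicitly is the justification of Fubini, and the observation that the $\ell^1$-norm arises precisely because the one-variable Fourier transform happens to be an exponential in $|\xi_i|$, so that the product over coordinates converts to a sum in the exponent.
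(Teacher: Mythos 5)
Your proof is correct and follows exactly the same route as the paper's: factor the integral coordinate-wise via Fubini, apply Lemma \ref{lem:1} to each one-dimensional factor, and sum the exponents to obtain the $\ell^1$-norm. The only difference is that you make the Fubini/integrability justification explicit, which the paper leaves implicit.
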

\begin{proof}
By lemma \ref{lem:1},
    \begin{align*}
        \widehat{f}(\xi)  &= \int_{\R^n} f(x)e^{2 \pi i \xi \cdot x}d x = \prod_{i=1}^n\int_{-\infty}^\infty \frac{1}{1+s x_i^2}e^{2 \pi i \xi_i x_i} dx_i \\
        &=\prod_{i=1}^n \frac{\pi}{\sqrt{s}} e^{-2 \pi \frac{|\xi_i|}{\sqrt{s}}} = \left(\frac{\pi}{\sqrt{s}}\right)^n e^{-\frac{2\pi}{\sqrt{s}}\norm{\xi}_1}.
    \end{align*}
\end{proof}
Putting things together, we get

\begin{prop}
    \label{prop:2}
    Let $s>0$ be real and $\Lambda\subseteq\R^n$ a lattice. Then
    \begin{equation}
        \label{eq:Poisson}
        \sum_{x\in\Lambda}\prod_{i=1}^n \frac{1}{1+ sx_i^2}= \frac{1}{\vol(\Lambda)}\left(\frac{\pi}{\sqrt{s}}\right)^{n}\Theta_{\Lambda^*}^1 \left(e^{-\frac{2\pi}{\sqrt{s}}}\right),
    \end{equation}
    where $\Theta_{\Lambda^*}^1$ is the theta function of the dual lattice $\Lambda^*$ in the $\ell^1$-norm.
\end{prop}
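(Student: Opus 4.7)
The plan is to apply the Poisson summation formula (Theorem \ref{thm:Poisson}) directly to the function $f(x)=\prod_{i=1}^n \frac{1}{1+sx_i^2}$ on the lattice $\Lambda$, and then recognize the right-hand side in terms of $\Theta_{\Lambda^*}^1$. By Lemma \ref{lem:2}, the Fourier transform is $\widehat{f}(\xi)=(\pi/\sqrt{s})^n e^{-(2\pi/\sqrt{s})\|\xi\|_1}$, so summing over $\xi\in\Lambda^*$ and pulling out the constant gives
\begin{equation*}
    \sum_{y\in\Lambda^*}\widehat{f}(y) = \left(\frac{\pi}{\sqrt{s}}\right)^n\sum_{y\in\Lambda^*} \bigl(e^{-2\pi/\sqrt{s}}\bigr)^{\|y\|_1} = \left(\frac{\pi}{\sqrt{s}}\right)^n\Theta_{\Lambda^*}^1\bigl(e^{-2\pi/\sqrt{s}}\bigr),
\end{equation*}
which, divided by $\vol(\Lambda)$, yields the claimed identity.

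The main obstacle is that $f$ is \emph{not} a Schwartz function: each factor $1/(1+sx_i^2)$ decays only like $|x_i|^{-2}$, so $f$ has polynomial rather than super-polynomial decay. Hence Theorem \ref{thm:Poisson} as stated does not apply on the nose, and I would need to justify the use of Poisson summation under weaker hypotheses. The standard way to handle this is to note that both $f$ and $\widehat{f}$ are continuous, $f\in L^1(\R^n)$, and $\widehat{f}$ decays exponentially in $\|\xi\|_1$, so both sums converge absolutely and uniformly on compacta. A well-known extension of Poisson summation (for $\Lambda=\Z^n$ and then transferred to general lattices via a linear change of variables) holds as soon as $|f(x)|+|\widehat{f}(x)|\le C(1+|x|)^{-n-\delta}$ for some $\delta>0$ — a condition trivially satisfied here.

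An alternative, self-contained route to avoid citing a more general Poisson formula is approximation: convolve $f$ with a Gaussian bump of variance $\varepsilon$, obtaining a Schwartz function $f_\varepsilon$ whose Fourier transform is $\widehat{f}(\xi)e^{-\pi\varepsilon\|\xi\|^2}$. Theorem \ref{thm:Poisson} applies to $f_\varepsilon$ exactly, and then one lets $\varepsilon\to 0^+$. Dominated convergence works on the spectral side because $\widehat{f}$ decays exponentially on $\Lambda^*$, and on the spatial side because $f$ is continuous and bounded with $f\in L^1$ and $\sum_{x\in\Lambda}f(x)<\infty$ (monotone convergence in $\varepsilon$ suffices after taking absolute values).

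Once the application of Poisson summation is justified, the remaining computation is essentially the one-line substitution above: the left-hand side of Theorem \ref{thm:Poisson} is the sum in \eqref{eq:Poisson}, and the right-hand side rewrites as the volume factor times $(\pi/\sqrt{s})^n$ times $\Theta_{\Lambda^*}^1(e^{-2\pi/\sqrt{s}})$ by the definition of the $\ell^1$-theta function. No further manipulation is needed.
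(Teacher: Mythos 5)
Your core argument is identical to the paper's: the paper's entire proof is ``apply Theorem \ref{thm:Poisson} to $f(x)=\prod_{i=1}^n(1+sx_i^2)^{-1}$ and use Lemma \ref{lem:2}.'' You are right, and it is to your credit, to flag that $f$ is not a Schwartz function, so Theorem \ref{thm:Poisson} as stated does not literally apply; the paper glosses over this.

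However, your first proposed repair is flawed: the condition $|f(x)|+|\widehat{f}(x)|\le C(1+|x|)^{-n-\delta}$ is \emph{not} ``trivially satisfied here'' once $n\ge 2$. Along a coordinate axis, $f(t,0,\dots,0)=(1+st^2)^{-1}\asymp t^{-2}$, which is far larger than $(1+|x|)^{-n-\delta}$ when $n+\delta>2$; the decay of $f$ is of product type, not radial, so this standard radial-decay criterion fails. The hypothesis you actually want for the general Poisson summation theorem is that $\sum_{x\in\Lambda}f(x+u)$ converges uniformly for $u$ in compacta and that $\sum_{y\in\Lambda^*}|\widehat{f}(y)|<\infty$; the latter is immediate from the exponential decay of $\widehat f$, and the former follows from a bound of the form $\sup_{\norm{z}_\infty\le r}f(x+z)\le C_s(1+r^2)^n f(x)$ together with $\sum_{x\in\Lambda}f(x)<\infty$. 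The same bound is what makes your second (mollification) route work: it furnishes a summable dominating function for $f*g_\varepsilon$ on $\Lambda$ uniformly in small $\varepsilon$, whereas your appeal to ``monotone convergence in $\varepsilon$'' does not apply as stated, since $f*g_\varepsilon$ is not monotone in $\varepsilon$. With either of these corrections the argument is complete and matches the paper's intent; an even cleaner variant is to write $\frac{1}{1+sx_i^2}=\int_0^\infty e^{-u_i}e^{-su_ix_i^2}\,du_i$, apply Poisson summation to the resulting genuine Gaussians, and integrate using Tonelli.
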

\begin{proof}
    Apply Theorem \ref{thm:Poisson} to the function $f(x)=\prod_{i=1}^n \frac{1}{1+s x_i^2}$ and lattice $\Lambda$, and use Lemma \ref{lem:2}.
\end{proof}

As a corollary, we obtain the following probability estimates.

\begin{cor}
    \label{cor:probability_bounds}
    The decoding error probability of B in a Rayleigh fading wiretap channel with lattice coding or lattice coset coding is upper bounded by
    \begin{align*}
       P_{e,b} &\leq \frac{1}{2}\left(\frac{1}{\vol{\Lambda_b}}\left(\frac{2\pi}{\sqrt{\gamma_b}}\right)^n\Theta^1_{\Lambda_b^*}(e^{-\frac{4\pi}{\sqrt{\gamma_b}}})-1\right).
       \end{align*}
       The ECDP in the Rayleigh fading wiretap channel with lattice coset coding is upper bounded by
       \begin{align*}
       P_{c,e} &\leq \frac{\vol{\Lambda_b}}{\vol{\Lambda_e}}\left(\frac{\pi}{\sqrt{6}}\right)^{n}\Theta_{\Lambda_e^*}^{1}\left(e^{-\frac{2\sqrt{2}\pi}{ \sqrt{3\gamma_e}}}\right).
    \end{align*}
\end{cor}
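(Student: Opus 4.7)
The proof is a direct bookkeeping exercise that threads the two starting expressions (\ref{eq:Bob_correct}) for the legitimate receiver and (\ref{eq:CDP}) for the eavesdropper through Proposition \ref{prop:2}. My plan would therefore be to handle the two bounds in parallel, each by choosing the correct value of the parameter $s$ and then simplifying the resulting constants.

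For the legitimate receiver's bound, I would start from the inequality (\ref{eq:Bob_correct}), which gives
\[
P_{e,b}\leq \frac{1}{2}\sum_{x\in\Lambda_b\setminus\{0\}}\prod_{i=1}^n\frac{1}{1+\tfrac{\gamma_b}{4}x_i^2}.
\]
I would then add back the $x=0$ term (which contributes $1$) to obtain a full sum over $\Lambda_b$, and apply Proposition \ref{prop:2} with lattice $\Lambda_b$ and parameter $s=\gamma_b/4$. The identifications $\pi/\sqrt{s}=2\pi/\sqrt{\gamma_b}$ and $2\pi/\sqrt{s}=4\pi/\sqrt{\gamma_b}$ then yield the first inequality of the corollary after subtracting the $1$ and dividing by $2$.

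For the eavesdropper's bound I would begin with the expression (\ref{eq:CDP}), apply the Bernoulli-inequality bound (\ref{inequality:FG}) to replace $F(\Lambda_e;\gamma_e)$ by $G(\Lambda_e;\gamma_e)$, and then apply Proposition \ref{prop:2} with $s=\tfrac{3\gamma_e}{2}$ on the dual $\Lambda_e^*$. This produces the theta-function factor $\Theta^1_{\Lambda_e^*}\!\bigl(e^{-2\pi/\sqrt{3\gamma_e/2}}\bigr)=\Theta^1_{\Lambda_e^*}\!\bigl(e^{-2\sqrt{2}\pi/\sqrt{3\gamma_e}}\bigr)$. What remains is to collect the prefactors; multiplying
\[
\Bigl(\frac{\gamma_e}{4}\Bigr)^{n/2}\cdot \Bigl(\frac{\pi}{\sqrt{3\gamma_e/2}}\Bigr)^n = \frac{\gamma_e^{n/2}}{2^n}\cdot \frac{\pi^n 2^{n/2}}{3^{n/2}\gamma_e^{n/2}} = \Bigl(\frac{\pi}{\sqrt{6}}\Bigr)^n
\]
and using the factor $\vol(\Lambda_b)/\vol(\Lambda_e)$ coming from $\vol(\Lambda_b)$ in (\ref{eq:CDP}) and $1/\vol(\Lambda_e)$ in Proposition \ref{prop:2} gives the claimed expression.

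There is no real obstacle here: both estimates reduce to a single application of the Poisson summation identity (Proposition \ref{prop:2}) preceded by an elementary manipulation (subtracting the zero term in one case, invoking Bernoulli's inequality in the other). The only care needed is in tracking the substitutions for $s$ and simplifying the powers of $\gamma_b$, $\gamma_e$, $\pi$, $2$, and $3$ so that the exponents on the theta-function argument and on the constant prefactor both match the form in the statement.
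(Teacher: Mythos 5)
Your proposal is correct and matches the paper's (implicit) derivation exactly: the corollary is obtained by applying Proposition \ref{prop:2} to (\ref{eq:Bob_correct}) with $s=\gamma_b/4$ (after restoring the $x=0$ term) and to (\ref{eq:CDP}) via (\ref{inequality:FG}) with $s=\tfrac{3\gamma_e}{2}$, and your constant bookkeeping checks out. One trivial wording slip: Proposition \ref{prop:2} is applied to the lattice $\Lambda_e$ itself (not ``on the dual $\Lambda_e^*$''), which is what produces the theta function of $\Lambda_e^*$ on the right-hand side, as your displayed computation in fact assumes.
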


Note that the first part of the above corollary is a reformulation of (\ref{eq:Bob_correct}) in the language of $\ell^1$-norm theta functions. It gives the design criterion: minimize the $\ell^1$-norm theta function of $\Lambda_b^*$, at the value $q=e^{-\frac{4\pi}{\sqrt{\gamma_b}}}$. The second part gives the following design criterion for the Rayleigh fading wiretap channel with lattice coset coding: minimize the $\ell^1$-norm theta function of the dual lattice $\Lambda_e^*$, at the value $q=e^{-\frac{2\sqrt{2}\pi}{\sqrt{3 \gamma_e}}}$. This suggest choosing the lattice pair $(\Lambda_b,\Lambda_e)$ on the dual side: first, one selects a lattice $\Lambda_e^*$ with small $\ell^1$-norm theta function, and then a sublattice $\Lambda_b^*\subseteq\Lambda_e^*$ of the desired index with the same property. Note the similarity with the AWGN channel design criterion: there, we want to minimize the $\ell^2$-norm theta function $\Theta_{\Lambda_e}^2\left(e^{-\frac{1}{2\sigma_e^2}}\right)$.

Now, writing $\Theta_{\Lambda}^1(q)=\sum_{k\geq 0} N_k q^k$ where $N_k=|\{x\in \Lambda:\norm{x}_1=k\}|$, we see that the problem of minimizing the $\Theta_{\Lambda}^1$-function and minimizing the shortest lattice vector length in the $\ell^1$-norm become equivalent in the limit as $q\to 0$ i.e. $\gamma_e\to 0$. Although the $\Theta^1$-function lacks some of the properties that the usual theta function has, like modularity and invariance under orthogonal transformations, a lot can be said about the theta function of rational lattices and their duals, see Section \ref{sec:theta_functions}.

In the $\ell^2$-norm, the minimization of the theta function in specific dimensions is an important and well-studied problem. A  result by Montgomery \cite{montgomery1988minimal} states that the densest lattice packing of balls in dimension 2, the hexagonal packing, also minimizes the theta function for $all$ real values $0\leq q<1$. In Remark \ref{rem:theta_minimum} it is proved that the same is false for the densest lattice packing of the $\ell^1$-ball in dimension 2. Moreover, in dimensions 8 and 24, respectively, the densest sphere packings, the $E_8$ and Leech lattice both minimize the theta function for all real values of $q$ \cite{cohn2022universal}. In dimension 3, it is widely conjectured that  the $D_3$ lattice minimizes the theta function for all $q\leq e^{-\pi}$, while its dual $D_3^*$ is the minimizer for $q\geq e^{-\pi}$, and equality holds at $q=e^{-\pi}$ by Poisson summation. Note that in order to compare the theta functions of different lattices, they must be scaled to have the same volume.

\section{$\Theta^1$-functions of lattices}
\label{sec:theta_functions}

In this section, we derive a closed form expression for the $\Theta^1$-function of any lattice $\Lambda\subseteq\Z^n$ in terms of the symmetric weight distribution of $\Lambda/m\Z^n$, where $m$ is the smallest positive integer such that $m\Z^n\subseteq \Lambda$. It was proved in \cite[Theorem 3]{sole1995counting} that $\Theta_{\Lambda}^1(q)$ is a rational function of $q$; the next theorem gives a description of what that rational function is.

\begin{thm}
\label{thm:Theta_l1_norm}
    Let $C$ be an $m$-ary linear code of length $n$ and $\Lambda_{A(C)}$ the associated Construction A lattice. Then the $\Theta^1$-function of $\Lambda_{A(C)}$ is given by
    $$\Theta^1_{\Lambda_{A(C)}}(q)=\frac{1}{(1-q^m)^n} swe_C(1+q^m,q+q^{m-1},\dots, q^{\lfloor\frac{m}{2}\rfloor}+q^{m-\lfloor\frac{m}{2}\rfloor}).$$
\end{thm}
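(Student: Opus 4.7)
The plan is to exploit the product structure of the $\ell^1$-norm together with the coset decomposition $\Lambda_{A(C)} = \bigsqcup_{c \in C}(\tilde c + m\Z^n)$, where for each codeword $c \in C$ I fix the lift $\tilde c \in \{0,1,\dots,m-1\}^n$. Because $\|x\|_1 = \sum_i |x_i|$ is separable, this turns the theta function into
\[
    \Theta^1_{\Lambda_{A(C)}}(q) \;=\; \sum_{c\in C}\prod_{i=1}^n S_{\tilde c_i}(q),\qquad S_a(q):=\sum_{\substack{x\in\Z\\ x\equiv a\,(\mathrm{mod}\,m)}} q^{|x|},
\]
so the whole problem reduces to computing the one-dimensional sums $S_a(q)$ for $a\in\{0,1,\dots,m-1\}$ and matching them against the substitution prescribed by the $swe$.

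Next I would evaluate $S_a(q)$ in closed form by splitting the residue class into its nonnegative and negative halves. For $a=0$ one gets $S_0(q)=1+2\sum_{k\ge 1}q^{km}=\frac{1+q^m}{1-q^m}$. For $0<a<m$ the nonnegative half contributes $\sum_{k\ge 0}q^{a+km}=\frac{q^a}{1-q^m}$, while the negative half contributes $\sum_{k\ge 1}q^{km-a}=\frac{q^{m-a}}{1-q^m}$, giving the symmetric expression $S_a(q)=\frac{q^a+q^{m-a}}{1-q^m}$. The key observation is that this is invariant under $a\mapsto m-a$, so $S_a(q)=S_{m-a}(q)$ depends only on $\min(a,m-a)$; in particular, when $m$ is even and $a=m/2$, the formula correctly collapses to $\frac{2q^{m/2}}{1-q^m}$.

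The final step is bookkeeping: factor out the common denominator to obtain
\[
    \Theta^1_{\Lambda_{A(C)}}(q) \;=\; \frac{1}{(1-q^m)^n}\sum_{c\in C}\prod_{i=1}^n \bigl(q^{\tilde c_i}+q^{m-\tilde c_i}\bigr),
\]
with the convention $q^0+q^m=1+q^m$. Since the bracketed factor depends only on the unordered pair $\{\tilde c_i,\, m-\tilde c_i\}$, I can regroup the product over $i$ by the number of coordinates $\tilde c_i$ falling into each class $\pm j\pmod m$ for $j=0,1,\dots,\lfloor m/2\rfloor$; by definition of $n_j(c)$ the product becomes $\prod_{j=0}^{\lfloor m/2\rfloor}(q^j+q^{m-j})^{n_j(c)}$. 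Recognizing this as the value of $swe_C$ at $x_j=q^j+q^{m-j}$ finishes the proof.

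I do not expect a real obstacle here; the argument is essentially a generating-function computation. The one place demanding a little care is making sure the "self-paired" residues $j=0$ and (when $m$ is even) $j=m/2$ are handled consistently with the convention used in the definition of $swe_C$ from Lemma~\ref{lem:Shixin}, where $x_0$ and $x_{m/2}$ are single variables rather than paired ones. The identity $q^{m/2}+q^{m-m/2}=2q^{m/2}$ is precisely what makes the substitution work uniformly, so the formula needs no case split in the final statement.
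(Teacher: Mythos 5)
Your proof is correct and follows essentially the same route as the paper: coset decomposition over $C$, coordinate-wise factorization of the $\ell^1$-theta sum, explicit evaluation of the one-dimensional residue-class sums, and regrouping into $swe_C$. The only cosmetic difference is that the paper first computes $\Theta^1_{\Z+x}(q)=\frac{q^x+q^{1-x}}{1-q}$ for a real shift $x\in[0,1)$ and then rescales $q\mapsto q^m$, whereas you evaluate the sums $S_a(q)$ directly; the resulting expressions coincide.
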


\begin{proof}
    Let $x\in[0,1)$. Then
    \begin{align*}
        \Theta^1_{\Z+x}(q) &= \sum_{z\in\Z}q^{|z+x|}=\sum_{z\geq -x} q^{z+x}+\sum_{z<-x}q^{-z-x}\\
        &=q^x\sum_{z=0}^{\infty}q^z +q^{-x}\sum_{z=1}^\infty q^z=\frac{1}{1-q}(q^x+q^{1-x}).
    \end{align*}
    Thus, partitioning $\Lambda_{A(C)}$ as $\Lambda_{A(C)}=\cup_{c\in C}(m\Z^n+c)$ and using the multiplicativity of the theta function we get
    \begin{align*}
        \Theta^1_{\Lambda_{A}(C)}(q) &= \sum_{c\in C} \Theta^1_{m \Z^n+c}(q)=\sum_{c\in C}\prod_{i=1}^n \Theta^1_{m\Z+ c_i}(q)\\
        &=\sum_{c\in C}\prod_{i=1}^n \Theta^1_{\Z+ \frac{c_i}{m}}(q^m)= \frac{1}{(1-q^m)^n} \sum_{c\in C}\prod_{i=1}^n(q^{c_i}+q^{m-c_i})\\
        &=\frac{1}{(1-q^m)^n} swe_C(1+q^m,q+q^{m-1},\dots, q^{\lfloor\frac{m}{2}\rfloor}+q^{m-\lfloor\frac{m}{2}\rfloor}).
    \end{align*}
\end{proof}

As a corollary we obtain the following special cases. The binary case was proven by Sol\'{e} \cite{sole1995counting}.

\begin{cor}
\label{cor:theta_binary_ternary}
    If $C$ is a binary linear code of length $n$ with Hamming weight enumerator $W_C(x,y)$, then
    $$\Theta^1_{\Lambda_A(C)}(q)=\frac{W_C(1+q^2,2q)}{(1-q^2)^n}.$$
    If $C$ is a ternary linear code of length $n$ with Hamming weight enumerator $W_C(x,y)$, then
    $$\Theta^1_{\Lambda_A(C)}(q)=\frac{W_C(1+q^3,q+q^2)}{(1-q^3)^n}.$$
\end{cor}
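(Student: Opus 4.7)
The plan is to deduce both formulas directly from Theorem \ref{thm:Theta_l1_norm} by observing that in the binary and ternary cases the symmetric weight enumerator degenerates to the Hamming weight enumerator, as the excerpt already remarks immediately after the definition of $swe_C$. So the work is essentially to verify that the specialization of variables prescribed by Theorem \ref{thm:Theta_l1_norm} reproduces the formulas claimed.

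First I would note that for $m=2$ and $m=3$ one has $\lfloor m/2 \rfloor = 1$, so $swe_C$ is a polynomial in the two variables $x_0, x_1$. For $m=2$, a coordinate $c_i \in \{0,1\}$ is either $0$ or $\pm 1 \pmod 2$, so $n_0(c) = n - |c|$ and $n_1(c) = |c|$; for $m=3$, a coordinate $c_i \in \{0,1,2\}$ is either $0$ or $\pm 1 \pmod 3$ (since $2 \equiv -1$), so again $n_0(c) = n - |c|$ and $n_1(c) = |c|$. In both cases therefore
$$swe_C(x_0, x_1) = \sum_{c \in C} x_0^{n - |c|} x_1^{|c|} = W_C(x_0, x_1).$$

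Next I would specialize the formula from Theorem \ref{thm:Theta_l1_norm}. For $m=2$ the arguments of $swe_C$ are $(1+q^2, q+q^{2-1}) = (1+q^2, 2q)$, and combining with $swe_C = W_C$ yields
$$\Theta^1_{\Lambda_{A(C)}}(q) = \frac{W_C(1+q^2, 2q)}{(1-q^2)^n}.$$
For $m=3$ the arguments are $(1+q^3, q+q^{3-1}) = (1+q^3, q+q^2)$, giving
$$\Theta^1_{\Lambda_{A(C)}}(q) = \frac{W_C(1+q^3, q+q^2)}{(1-q^3)^n}.$$
There is no real obstacle here; the only thing to be careful about is correctly matching the powers $q^{c_i} + q^{m-c_i}$ from the proof of Theorem \ref{thm:Theta_l1_norm} with the two-variable evaluation, and noting that $2q$ in the binary case comes from $q + q^{2-1} = 2q$ (equivalently, from summing the two representatives $c_i = 1$ of the nonzero residue class).
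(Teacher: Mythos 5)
Your proof is correct and matches the paper's intent exactly: the paper gives no separate proof, treating the corollary as an immediate specialization of Theorem \ref{thm:Theta_l1_norm} together with the earlier remark that the symmetric weight enumerator reduces to the Hamming weight enumerator for $q=2,3$, which is precisely the computation you carry out.
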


Using the general variant of MacWilliams' identity, Lemma \ref{lem:Shixin}, one can obtain a closed form expression for the $\Theta^1$-function of the dual lattice $\Lambda_{A(C)}^*=\frac{1}{q}\Lambda_{A(C^\perp)}$ as well. 

\begin{prop}
    \label{prop:Theta_l1_dual}
    Let $C$ be a linear $m$-ary code of length $n$ and let $C^\perp$ denote its dual. Then
    \begin{align*}
        \Theta_{\Lambda_{A(C^\perp)}}^1(q)=\frac{(1-q^2)^n}{|C|}swe_C(y_0,y_1,\dots,y_{\lfloor\frac{m}{2}\rfloor}),
    \end{align*}
    where $$y_k=\frac{1}{q^2-2q\cos\left(\frac{2 \pi k}{m}\right)+1}\quad\text{for all $k=0,1,\dots,\lfloor\frac{m}{2}\rfloor$}.$$
\end{prop}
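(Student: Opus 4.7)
The natural approach is to apply Theorem \ref{thm:Theta_l1_norm} to the dual code $C^\perp$ and then use the generalized MacWilliams identity of Lemma \ref{lem:Shixin} to rewrite the resulting symmetric weight enumerator. Theorem \ref{thm:Theta_l1_norm} gives
$$\Theta^1_{\Lambda_{A(C^\perp)}}(q) = \frac{1}{(1-q^m)^n}\, swe_{C^\perp}(x_0,x_1,\ldots,x_t),$$
with $x_j = q^j + q^{m-j}$ and $t = \lfloor m/2 \rfloor$. Lemma \ref{lem:Shixin} (with alphabet size $m$ in place of the lemma's $q$) then expresses $swe_{C^\perp}(x_0,\ldots,x_t) = \tfrac{1}{|C|} swe_C(\tilde y_0,\ldots,\tilde y_t)$ for the cosine-weighted combinations $\tilde y_k$ of the $x_j$ defined in the lemma. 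The whole proof thus reduces to showing that, after the substitution $x_j = q^j + q^{m-j}$,
$$\tilde y_k = \frac{(1-q^m)(1-q^2)}{q^2 - 2q\cos(2\pi k/m) + 1} = (1-q^m)(1-q^2)\, y_k,$$
since homogeneity of degree $n$ of $swe_C$ then extracts a factor $[(1-q^m)(1-q^2)]^n$, which cancels the $(1-q^m)^n$ in the denominator and leaves $\frac{(1-q^2)^n}{|C|} swe_C(y_0,\ldots,y_t)$, as claimed.

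For the key identity on $\tilde y_k$, the symmetry $\cos(2\pi k(m-j)/m) = \cos(2\pi kj/m)$ lets both the even and odd branches of Lemma \ref{lem:Shixin} collapse to the uniform expression
$$\tilde y_k = 1 + q^m + 2\sum_{j=1}^{m-1} \cos\!\left(\frac{2\pi kj}{m}\right) q^j = 2S_k - 1 + q^m, \quad S_k := \sum_{j=0}^{m-1}\cos\!\left(\frac{2\pi kj}{m}\right) q^j.$$
Writing $\cos\theta = \tfrac12(e^{i\theta} + e^{-i\theta})$ and setting $\omega = e^{2\pi ik/m}$ (so $\omega^m = 1$), the sum $S_k$ becomes a sum of two finite geometric series,
$$S_k = \tfrac{1}{2}(1-q^m)\!\left(\frac{1}{1-\omega q} + \frac{1}{1-\omega^{-1}q}\right) = \frac{(1-q^m)(1 - q\cos(2\pi k/m))}{q^2 - 2q\cos(2\pi k/m) + 1}.$$
Substituting into $2S_k - 1 + q^m$ and applying the elementary identity $2(1 - q\cos\theta) - (q^2 - 2q\cos\theta + 1) = 1 - q^2$ yields the desired closed form for $\tilde y_k$.

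The main obstacle is really just careful bookkeeping: one has to verify that the boundary term $j = m/2$, which Lemma \ref{lem:Shixin} treats separately in the even case via the $\cos(\pi i)x_t$ summand, folds correctly into the unified sum $\sum_{j=1}^{m-1}$ (with $x_{m/2} = 2q^{m/2}$ contributing exactly once). A sanity check at $k = 0$, where both sides reduce to $(1-q^m)(1+q)/(1-q)$, confirms the parity handling and provides a useful test case before proceeding to general $k$.
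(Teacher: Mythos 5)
Your proposal is correct and follows essentially the same route as the paper's proof: apply Theorem \ref{thm:Theta_l1_norm} to $C^\perp$, invoke Lemma \ref{lem:Shixin}, and reduce everything to evaluating the cosine-weighted geometric sums. The one genuine improvement is your observation that the symmetry $\cos\bigl(\tfrac{2\pi k(m-j)}{m}\bigr)=\cos\bigl(\tfrac{2\pi kj}{m}\bigr)$ collapses both parity branches of Lemma \ref{lem:Shixin} into the single full-range sum $S_k=\sum_{j=0}^{m-1}\cos\bigl(\tfrac{2\pi kj}{m}\bigr)q^j$, which handles $m$ even and odd uniformly; the paper instead splits the sum into the two half-range pieces $\sum_j q^j(\zeta_m^{kj}+\zeta_m^{-kj})$ and $\sum_j q^{m-j}(\zeta_m^{kj}+\zeta_m^{-kj})$, carries out the computation only for $m$ odd, and dismisses the even case as analogous, so your version is actually the more complete argument.
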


\begin{proof}
Let $t=\lfloor\frac{m}{2}\rfloor$, and let $\zeta_m:=e^{\frac{2 \pi i}{m}}$. By Theorem \ref{thm:Theta_l1_norm} and Lemma \ref{lem:Shixin},
\begin{align*}
    \Theta_{\Lambda_{A(C^\perp)}}^1(q)&=\frac{1}{(1-q^m)^n} swe_{C^{\perp}}(1+q^m,q+q^{m-1},\dots, q^{\lfloor\frac{m}{2}\rfloor}+q^{m-\lfloor\frac{m}{2}\rfloor}) \\
    &=\frac{1}{|C|(1-q^m)^n}swe_{C}(y_0,y_1,\dots, y_t),
\end{align*}
where
\begin{align*}
        y_k=\begin{cases}
            1+q^m+\sum_{j=1}^{t-1}(\zeta_m^{kj}+\zeta_{m}^{-kj})(q^j+q^{m-j})+2\cos(\pi k) q^{\frac{m}{2}},&\text{if $m$ is even,}\\
            1+q^m+\sum_{j=1}^t(\zeta_m^{kj}+\zeta_{m}^{-kj})(q^j+q^{m-j}),&\text{if $m$ is odd.}
        \end{cases}
    \end{align*}
Above we used the fact that $\cos\left(\frac{2\pi x}{m}\right)=\frac{\zeta_m^x+\zeta_m^{-x}}{2}$. Consider the case $m$ odd first. Then
{\small %
\begin{align*}
    &\sum_{j=1}^t q^j(\zeta_m^{kj}+\zeta_m^{-kj}) = q \zeta_m^k\frac{q^t \zeta_m^{tk}-1}{q\zeta_m^k-1}+q \zeta_m^{-k}\frac{q^t \zeta_m^{-tk}-1}{q\zeta_m^{-k}-1} \\
    &=\frac{q}{q^2-2 \cos\left(\frac{2\pi k}{m}\right)q+1}((q-\zeta_m^k)(q^t \zeta_m^{tk}-1)+(q-\zeta_m^{-k})(q^t \zeta_m^{-t k}-1))\\
    &=\frac{2q}{q^2-2 \cos\left(\frac{2\pi k}{m}\right)q+1}\left(\cos\left(\frac{2\pi t k}{m}\right)q^{t+1}-\cos\left(\frac{2\pi(t+1)k}{m}\right)q^t-q+\cos\left(\frac{2\pi k}{m}\right)\right).
\end{align*}
} %

Similarly, using that $t=\frac{m-1}{2}$ and replacing $q$ by $q^{-1}$ above and multiplying by $q^m$,
{\footnotesize %
\begin{align*}
    &\sum_{j=1}^t q^{m-j} (\zeta_m^{kj}+\zeta_m^{-kj}) \\
    &= \frac{2 q^{m+1}}{q^2-2\cos\left(\frac{2\pi k}{m}\right)q+1}\left(\cos\left(\frac{2\pi t k}{m}\right)q^{-(t+1)}-\cos\left(\frac{2\pi(t+1)k}{m}\right)q^{-t}-q^{-1}+\cos\left(\frac{2\pi k}{m}\right)\right) \\
    &= \frac{2 q}{q^2-2\cos\left(\frac{2\pi k}{m}\right)q+1}\left(\cos\left(\frac{2\pi t k}{m}\right)q^{t}-\cos\left(\frac{2\pi(t+1)k}{m}\right)q^{t+1}-q^{m-1}+\cos\left(\frac{2\pi k}{m}\right)q^m\right).
\end{align*}
} %
Note that 

\begin{align*}
    \cos\left(\frac{2\pi t k}{m}\right)-\cos\left(\frac{2\pi (t+1) k}{m}\right)&=\cos\left(\frac{\pi (m-1) k}{m}\right)-\cos\left(\frac{\pi (m+1)k}{m}\right)\\
    &=\cos\left(\pi k-\frac{\pi k}{m}\right)-\cos\left(\pi k+\frac{\pi k}{m}\right)=0
\end{align*} 

for all integral values of $k$. Therefore,
\begin{align*}
  &1+q^m+\sum_{j=1}^t (q^j+q^{m-j})(\zeta_m^{kj}+\zeta_m^{-kj}) 
  \\
  &=1+q^m+ \frac{2 q}{q^2-2\cos\left(\frac{2\pi k}{m}\right)q+1}\left(\cos\left(\frac{2\pi k}{m}\right)(q^m+1)-(q^{m-1}+q)\right)\\
  &= \frac{(1-q^2)(1-q^m)}{q^2-2\cos\left(\frac{2\pi k}{m}\right) q+1}.
\end{align*}
The case $m$ even is analogous and thus omitted. The claim follows from the fact that $swe_C$ is a homogeneous polynomial of degree $n$.
\end{proof}

We can extract the following special cases from the previous proposition:

\begin{cor}
    \label{cor:theta_binary_ternary_duals}
    If $C$ is a binary linear code of length $n$ with Hamming weight enumerator $W_C(x,y)$, then
    $$\Theta^1_{\Lambda_A(C^\perp)}(q)=\frac{1}{|C|}W_C\left(\frac{1+q}{1-q},\frac{1-q}{1+q}\right).$$
    If $C$ is a ternary linear code of length $n$ with Hamming weight enumerator $W_C(x,y)$, then
    $$\Theta^1_{\Lambda_A(C^\perp)}(q)=\frac{1}{|C|}W_C\left(\frac{1+q}{1-q},\frac{1-q^2}{1+q+q^2}\right).$$
\end{cor}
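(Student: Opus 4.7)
The plan is to specialize Proposition \ref{prop:Theta_l1_dual} to $m = 2$ and $m = 3$ and simplify. The main ingredients will be: (i) the remark already made in the paper that $swe_C = W_C$ for $q \in \{2, 3\}$; (ii) explicit evaluation of the quantities $y_k = 1/(q^2 - 2q\cos(2\pi k/m) + 1)$ for these small $m$; and (iii) homogeneity of $W_C$ (degree $n$) to absorb the prefactor $(1 - q^2)^n$ into the arguments.

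First I would do the binary case. Here $\lfloor m/2 \rfloor = 1$ so only $y_0$ and $y_1$ appear, and
\[
y_0 = \frac{1}{q^2 - 2q + 1} = \frac{1}{(1-q)^2}, \qquad y_1 = \frac{1}{q^2 + 2q + 1} = \frac{1}{(1+q)^2},
\]
using $\cos 0 = 1$ and $\cos \pi = -1$. Substituting into Proposition \ref{prop:Theta_l1_dual} gives
\[
\Theta^1_{\Lambda_{A(C^\perp)}}(q) = \frac{(1-q^2)^n}{|C|} W_C\!\left(\frac{1}{(1-q)^2}, \frac{1}{(1+q)^2}\right).
\]
Now I use the homogeneity identity $W_C(\lambda x, \lambda y) = \lambda^n W_C(x, y)$ with $\lambda = 1 - q^2 = (1-q)(1+q)$ to distribute the prefactor into the arguments, obtaining $W_C\!\left(\frac{1+q}{1-q}, \frac{1-q}{1+q}\right)/|C|$, as claimed.

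The ternary case proceeds identically. With $m = 3$, again $\lfloor m/2 \rfloor = 1$, so
\[
y_0 = \frac{1}{(1-q)^2}, \qquad y_1 = \frac{1}{q^2 - 2q\cos(2\pi/3) + 1} = \frac{1}{q^2 + q + 1},
\]
using $\cos(2\pi/3) = -1/2$. Substituting and applying the same homogeneity trick with $\lambda = 1 - q^2$, the first argument becomes $\frac{1-q^2}{(1-q)^2} = \frac{1+q}{1-q}$ and the second becomes $\frac{1-q^2}{1+q+q^2}$, matching the claim.

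There is no real obstacle here: the whole content is a direct specialization of Proposition \ref{prop:Theta_l1_dual}, using the identification $swe_C = W_C$ (valid only because $\lfloor q/2 \rfloor + 1 = 2$ for $q \in \{2,3\}$, so the symmetric weight enumerator already has exactly two variables) together with the degree-$n$ homogeneity of $W_C$. The only thing to double-check is the sign/evaluation of the cosines at $k = 1$ in each case, which I have done above.
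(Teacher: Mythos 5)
Your proposal is correct and matches the paper's (implicit) derivation exactly: the corollary is stated as a direct specialization of Proposition \ref{prop:Theta_l1_dual} to $m=2,3$, and your evaluation of the cosines, the identification $swe_C = W_C$ in these two cases, and the use of degree-$n$ homogeneity with $\lambda = 1-q^2$ to absorb the prefactor are precisely the intended steps. No gaps.
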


Below are some examples. Note that the $\ell^1$-norm theta function is not invariant under all rotations of the lattice. However, the group of signed permutations preserves the $\ell^1$-norm.
\begin{expl}
    The lattice $\Z^n$ is obtained from the binary code $C=\mathbb{F}_2^n$, with weight enumerator $W_C(x,y)=(x+y)^n$. Its $\Theta^1$-function is given by $$\Theta^1_{\Z^n}(q)=\left(\frac{1+q}{1-q}\right)^n=1 + 2 n q + 2 n^2 q^2 + \frac{2n}{3}(1+2n^2)q^3+\frac{2n^2}{3}(2+n^2)q^4\dots$$
\end{expl}

\begin{expl}
    The lattice $D_n$ is obtained from the binary code $C=\{c\in \mathbb{F}_2^n: \sum_{i=1}^n c_i\equiv 0\pmod{2}\}$, with weight enumerator $W_C(x,y)=\frac{1}{2}((x+y)^n+(x-y)^n)$. Its $\Theta^1$-function is given by $$\Theta^1_{D_n}(q)=\frac{(1+q)^{2n}+(1-q)^{2n}}{2(1-q^2)^n}=1 + 2 n^2 q^2 +\frac{2n^2}{3}(2+n^2)q^4\dots$$
\end{expl}

\begin{expl}
  The dual lattice of $D_n$ is $D_n^*=\frac{1}{2}\Lambda_{A(C^\perp)}$ where $C$ is the above even weight code, and $C^\perp$ is the "repetition code" of length $n$ and size $2$. Corollary \ref{cor:theta_binary_ternary_duals} gives
  $$\Theta_{D_n^*}^1(q)=\Theta_{\Lambda_{A(C^\perp)}}^1(q^{\frac{1}{2}})=\frac{1}{(1-q)^n}((1+q)^n+(2 \sqrt{q})^n)=\Theta_{\Z^n}^1(q)+2^n q^{\frac{n}{2}}+n 2^nq^{\frac{n}{2}+1}\dots,$$
  which also follows from Corollary \ref{cor:theta_binary_ternary} and the fact that the repetition code has weight enumerator $W_{C^\perp}(x,y)=x^n+y^n$.
\end{expl}

\begin{expl}
    $E_8$ is the unique even unimodular lattice in dimension $8$, and the densest sphere packing in its dimension. It is equal to $E_8=\frac{1}{\sqrt{2}}\Lambda_{A(\widetilde{H_7})}$ where $\widetilde{H_7}$ is the binary $[n,k,d]=[8,4,4]$ extended Hamming code with weight enumerator $W_{\widetilde{H_7}}(x,y)=x^8+14 x^4 y^4+y^8$. We have $\Theta_{E_8}^1(q)=\Theta_{\Lambda_{A(\widetilde{H_7})}}^1(q^{\frac{1}{\sqrt{2}}})$ where 
    \begin{align*}
        \Theta_{\Lambda_{A(\widetilde{H_7})}}^1(q)&=\frac{1}{(1-q^2)^8}((1+q^2)^8+14(1+q^2)^4(2q)^4+2^8 q^8)\\
        &=1 + 16 q^2 + 352 q^4 + 3376 q^6 + 19648 q^8 +\dots
    \end{align*}
\end{expl}

\begin{expl}
    Consider the quaternary Golay code $\widehat{Q}_4$, see \cite{BonnecazeSoleCalderbank}. This code has symmetric weight enumerator
    \begin{align*}
        swe_{\widehat{Q}_4}(x,y,z) &= x^{24}+759 x^{16} z^8+12144 x^{14} y^8 z^2+170016 x^{12} y^8 z^4+2576 x^{12} z^{12}\\
        &+61824 x^{11} y^{12} z+765072 x^{10} y^8 z^6+1133440 x^9 y^{12} z^3+24288 x^8 y^{16}\\
        &+1214400 x^8 y^8 z^8+759 x^8 z^{16}+4080384 x^7 y^{12} z^5+680064 x^6 y^{16} z^2\\
        &+765072 x^6 y^8 z^{10}+4080384 x^5 y^{12} z^7+1700160 x^4 y^{16} z^4\\
        &+170016 x^4 y^8 z^{12}+1133440 x^3 y^{12} z^9+680064 x^2 y^{16} z^6+12144 x^2 y^8 z^{14}\\
        &+61824 x y^{12} z^{11}+4096 y^{24}+24288 y^{16} z^8+z^{24}.
    \end{align*}
    Applying Construction A to this code produces the Leech lattice (the unique even unimodular 24-dimensional lattice without vectors of length $2$) \cite[Theorem 4.3]{BonnecazeSoleCalderbank}: $\Lambda_{\text{Leech}}=\frac{1}{2}\Lambda_{A(\widehat{Q}_4)}$. The $\ell^1$-norm theta function is $\Theta_{\Lambda_{\text{Leech}}}^1(q)=\Theta_{\Lambda_{A(\widehat{Q}_4)}}^1(q^{\frac{1}{2}})$ where
    $$\Theta_{\Lambda_{A(\widehat{Q}_4)}}^1(q)=1 + 48 q^4 + 1152 q^8 + 67024 q^{12} + 512256 q^{14} + 7850592 q^{16} + 
 61193984 q^{18}+\dots$$
\end{expl}

\subsection{Minimal theta functions}

Montgomery proved in his paper \cite[Theorem 2]{montgomery1988minimal}, that the theta function satisfies the inequality $\Theta^2_{\Z^n}(q)\leq \Theta_{\alpha_1 \Z\oplus \cdots \oplus \alpha_n \Z}^2(q)$ for all $0< q<1$, where
\begin{align}
    \label{eq:assumptions}
    \alpha_1\alpha_2\cdots \alpha_n=1 \text{ and } \alpha_i>0 \text{ for all $i=1,\dots,n$.}
\end{align}
Furthermore, the inequality is strict unless $\alpha_i=1$ for all $i=1,\dots,n$. Hence, the theta function is minimized at the only well-rounded lattice in the class of lattices of the form $\alpha_1\Z\oplus\cdots\oplus\alpha_n \Z$. Let us now prove the analogous result for the $\ell^1$-norm theta function. First, we need an auxiliary lemma.

\begin{lem}
    \label{lem:auxiliary}
    $\sinh(\alpha^2 z)\geq \alpha^2 \sinh(z)$ for all $\alpha\geq 1$, $z>0$ and the inequality is strict unless $\alpha=1$.
\end{lem}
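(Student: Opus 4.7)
The inequality is homogeneous enough that I would first reduce it to a one-parameter statement by setting $t = \alpha^2$, so that the claim becomes $\sinh(tz) \geq t\,\sinh(z)$ for all $t \geq 1$ and $z > 0$, with strict inequality when $t > 1$. This is equivalent to showing that the function $g(t) := \sinh(tz)/t$ is at least $\sinh(z) = g(1)$ for $t \geq 1$.

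The cleanest route is then a termwise comparison of the Taylor series. Expanding
\begin{equation*}
\sinh(tz) = \sum_{k=0}^\infty \frac{(tz)^{2k+1}}{(2k+1)!}, \qquad \text{so that} \qquad \frac{\sinh(tz)}{t} = \sum_{k=0}^\infty \frac{z^{2k+1}}{(2k+1)!}\, t^{2k},
\end{equation*}
each summand is a nonnegative constant (since $z > 0$) times $t^{2k}$. For $t \geq 1$ each factor $t^{2k}$ is at least $1$, and strictly larger than $1$ whenever $t > 1$ and $k \geq 1$. Summing termwise yields $\sinh(tz)/t \geq \sinh(z)$, with strict inequality for $t > 1$. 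Substituting back $t = \alpha^2$ gives the lemma.

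An alternative, essentially equivalent, route would be to compute $g'(t) = (tz\cosh(tz) - \sinh(tz))/t^2$ and note that $w\cosh(w) > \sinh(w)$ for $w > 0$ (true at $w = 0$ with derivative $w\sinh(w) > 0$), hence $g$ is strictly increasing on $(0,\infty)$. I do not anticipate any real obstacle: the only point to check is the strictness, which in the Taylor-series proof is transparent from any $k \geq 1$ term.
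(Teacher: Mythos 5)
Your proof is correct. Your primary route --- substituting $t=\alpha^2$ and comparing the Taylor series of $\sinh(tz)/t=\sum_{k\ge 0}\frac{z^{2k+1}}{(2k+1)!}t^{2k}$ termwise against $\sinh(z)$ --- is a genuinely different and arguably more transparent argument than the paper's: the paper instead fixes $z$, differentiates $\sinh(\alpha^2 z)-\alpha^2\sinh(z)$ with respect to $\alpha$, bounds the derivative below by $2\alpha\bigl(z\cosh(z)-\sinh(z)\bigr)$ using monotonicity of $\cosh$, and then shows this last quantity is positive by a second differentiation in $z$. Your series argument avoids both differentiations and makes the strictness (coming from any $k\ge 1$ term when $t>1$ and $z>0$) immediate, at the cost of being specific to the power-series structure of $\sinh$; the paper's calculus argument is the one that generalizes more readily to functions without such clean expansions. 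The alternative you sketch at the end (showing $g(t)=\sinh(tz)/t$ is increasing via $w\cosh(w)>\sinh(w)$ for $w>0$) is essentially the paper's proof in lightly disguised form, and it too is correct.
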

\begin{proof} Fix $z>0$. Equality holds at $\alpha=1$, and for $\alpha\geq 1$,
    \begin{align*}
        \frac{\partial}{\partial \alpha}(\sinh(\alpha^2 z)-\alpha^2 \sinh(z)) &=2\alpha(z\cosh(\alpha^2 z)-\sinh(z)\\
        &\geq 2\alpha(z\cosh(z)-\sinh(z))> 0
    \end{align*}
    since $0\cosh(0)-\sinh(0)=0$ and $\frac{d}{dz}(z\cosh(z)-\sinh(z))=z\sinh(z)>0$.
    \end{proof}
\pagebreak
\begin{prop}
\label{prop:orthogonal_theta_minimization}
    Let $\alpha_1,\dots,\alpha_n$ be real numbers satisfying (\ref{eq:assumptions}). Then $\Theta_{\Z^n}^1(q)\leq \Theta_{\alpha_1 \Z\oplus \cdots \oplus \alpha_n \Z}^1(q)$ for all $0<q<1$, and the inequality is strict unless $\alpha_i=1$ for all $i=1,\dots, n$.
\end{prop}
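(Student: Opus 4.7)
The plan is to factor the theta function across the orthogonal summands and reduce the inequality to a one-variable convexity statement that feeds into Jensen's inequality. Since the $\ell^1$-norm is additive across coordinate blocks, we have $\Theta^1_{\alpha_1\Z\oplus\cdots\oplus\alpha_n\Z}(q)=\prod_{i=1}^n\Theta^1_{\alpha_i\Z}(q)$, and a direct geometric series computation gives $\Theta^1_{\alpha\Z}(q)=(1+q^{\alpha})/(1-q^{\alpha})$. Writing $q=e^{-s}$ with $s>0$, this equals $\coth(\alpha s/2)$, so the desired inequality becomes
\[
   \prod_{i=1}^n \coth\!\left(\tfrac{\alpha_i s}{2}\right) \;\geq\; \coth\!\left(\tfrac{s}{2}\right)^{n},
\]
under the constraint $\prod_i\alpha_i=1$. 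Note that the substitution $q=e^{-s}$ is a bijection $(0,1)\to(0,\infty)$, so handling all $0<q<1$ amounts to handling all $s>0$.

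Next I would take logarithms and parameterize $\alpha_i=e^{u_i}$, so the constraint reads $\sum_i u_i=0$. Setting $v(u)=\log\coth(e^{u}s/2)$ for fixed $s>0$, the goal becomes $\frac{1}{n}\sum_i v(u_i)\geq v(0)$, which follows at once from Jensen's inequality provided $v$ is convex on $\R$. A short computation, using the chain rule together with $(\log\coth x)'=-2/\sinh(2x)$, yields $v'(u)=-t/\sinh(t)$ where $t=e^{u}s$. Hence $v''(u)\geq 0$ if and only if the map $t\mapsto t/\sinh(t)$ is non-increasing on $(0,\infty)$, equivalently $\sinh(t)/t$ is non-decreasing — which is precisely the content of Lemma \ref{lem:auxiliary} (after the substitution $t=\alpha^{2}z$, $z=s$). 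The strict form of the auxiliary lemma upgrades to strict convexity of $v$, and then strict Jensen forces equality only when all $u_i$ coincide; the constraint $\sum u_i=0$ then forces $\alpha_i=1$ for every $i$.

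The one place requiring care is the identification of the auxiliary lemma with the convexity hypothesis for Jensen; once this connection is recognized, everything else is routine algebra. An alternative route, which I would only fall back on if the convexity identification were obscured, is to prove the pair-exchange statement ``for fixed $\alpha\beta$, the product $\coth(\alpha s/2)\coth(\beta s/2)$ is minimized when $\alpha=\beta$'' and then iterate to equalize all coordinates; but this is essentially the same inequality and is cleaner via Jensen.
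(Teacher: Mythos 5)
Your proof is correct, and it takes a recognizably different route from the paper's in the multivariable reduction, even though both arguments ultimately rest on the same one-variable fact. The paper asserts that it suffices to treat the case $n=2$ with $\alpha_2=1/\alpha_1$, fixes $y$, and shows that $\alpha\mapsto\coth(\alpha y)\coth(y/\alpha)/\coth(y)^2$ is increasing on $[1,\infty)$ by differentiating and invoking Lemma~\ref{lem:auxiliary}; the passage from the two-variable statement to general $n$ is an equalization (smoothing) step that the paper leaves implicit. You instead prove that $u\mapsto\log\coth(e^{u}s/2)$ is strictly convex and apply Jensen under the constraint $\sum_i u_i=0$, which treats all $n$ coordinates at once and makes that reduction explicit and clean. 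Your identification of the convexity condition with Lemma~\ref{lem:auxiliary} is accurate: the lemma says exactly that $t\mapsto\sinh(t)/t$ is (strictly) increasing on $(0,\infty)$, and your computation $v'(u)=-t/\sinh(t)$ with $t=e^{u}s$, $dt/du=t>0$, correctly converts that monotonicity into $v''>0$; strict convexity then yields the equality case $\alpha_1=\cdots=\alpha_n=1$. What your approach buys is a self-contained handling of general $n$ (no smoothing argument needed); what the paper's buys is a slightly more elementary presentation that avoids the logarithmic reparametrization. Both are essentially the same inequality in different clothing, as you note yourself in your closing remark about the pair-exchange alternative.
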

\begin{proof}
    It is enough to prove that $\Theta_{\Z^2}^1(q)\leq \Theta_{\alpha\Z\oplus\frac{1}{\alpha}\Z}^1(q)$ for all $\alpha\geq 1$ and $0<q<1$, with equality if and only if $\alpha=1$. Making the substitution $q=e^{-2y}$, where $y>0$, gives
    \begin{align*}
        \Theta_{\Z^2}^1(q)=\left(\frac{1+e^{-2y}}{1-e^{-2y}}\right)^2&=\coth(y)^2,\\
        \Theta_{\alpha\Z\oplus\frac{1}{\alpha}\Z}^1(q)&=\coth(\alpha y)\coth\left(\frac{y}{\alpha}\right).
    \end{align*}
    So the claim reduces to proving that $f(\alpha,y)=\frac{\coth(\alpha y)\coth\left(\frac{y}{\alpha}\right)}{\coth(y)^2}\geq 1$ when $\alpha\geq 1$, $y>0$, and equality holds if and only if $\alpha=1$. Let $y>0$ be fixed. A simple computation gives that
    \begin{align*}
        \frac{\partial}{\partial \alpha} f(\alpha,y) &= \frac{y \tanh(y)^2}{ 2\alpha^2 \sinh(\alpha y)^2\sinh\left(\frac{y}{\alpha}\right)^2}\left(\sinh(2\alpha y)-\alpha^2\sinh\left(\frac{2 y}{\alpha}\right)\right).
    \end{align*}
    Let $z=\frac{2y}{\alpha}>0$ and apply the preceding lemma to obtain that $\alpha\mapsto f(\alpha,y)$ is strictly increasing on $\alpha\in [1,\infty)$.
\end{proof}

\begin{rem}
    \label{rem:theta_minimum}
    The minimizer of the $\Theta^1$-function for sufficiently small $q$ is given by the densest cross-polytope packing in the given dimension. However, even in dimension two, the densest cross-polytope packing does not minimize the theta-function for all values of $q$: The lattice $L_1$ generated by $2\Z^2$ and $(1,1)$ produces a lattice packing of cross-polytopes of packing density 1, while the lattice $L_2$ generated by $3\Z^2$ and $(1,1)$ has worse packing density. But $\Theta_{L_2}^1(q)<\Theta_{L_1}^1(q)$ if $q>0.0162678\dots$ (after scaling lattices to unit volume). This phenomenon does not occur with the $\ell^2$-norm theta function in dimensions $2$, $8$ and $24$, as remarked  earlier.
\end{rem}

\mycomment{\subsection{Comparisons of probability estimates}
\label{sec:comparisons}

We end this section by comparing the probability (\ref{eq:CDP}) with the second part of Corollary \ref{cor:probability_bounds}. The computation of $F(\Lambda;\gamma)$ to satisfactory precision for all values of $\gamma$ in a range close to 0, for an arbitrary lattice $\Lambda$, becomes computationally expensive as the dimension increases. Therefore, we need a method for computing this function using only information about the structure of $C=\Lambda/q\Z^n$, where $q\Z^n\subseteq \Lambda\subseteq \Z^n$. Of course, not all lattices are rational, but they can be approximated to arbitrary precision by rational lattices. If the symmetric weight distribution of $C$ is known, then it is enough to consider the following function, which we can efficiently compute:
\begin{align*}
\label{eq:Z_F}
  f(z;\gamma):=\sum_{x\in\Z} \frac{1}{(1+\gamma (x+z)^2)^{\frac{3}{2}}},\quad\text{ where $z\in[0,1)$ and $\gamma>0$.}
\end{align*}
This function has the symmetry $f(1-z;\gamma)=f(z;\gamma)$. Analogously to Theorem \ref{thm:Theta_l1_norm}, we get
\begin{lem}
    \label{lem:F_lemma}
    Let $C$ be a linear $q$-ary code of length $n$ and $\Lambda_{A(C)}$ the associated Construction A lattice. Then
    $F(\Lambda_{A(C)};\gamma) = swe_C(x_0,x_1,\dots,x_{\lfloor\frac{q}{2}\rfloor}),$
    where $x_i=f\left(\frac{i}{q};\gamma q^2\right)$.
\end{lem}

\begin{proof}
    The proof uses the property $F(c\Lambda;\gamma)=F(\Lambda;c^2\gamma)$ and the multiplicativity of $F$, but otherwise the proof is completely analogous to the proof of Theorem \ref{thm:Theta_l1_norm}.
\end{proof}

Figures \ref{fig:FG_comparison} and \ref{fig:FG_comparison2} compare the ECDP expression of (\ref{eq:CDP}) and the second part of Corollary \ref{cor:probability_bounds} for various four- and eight-dimensional lattices. The four-dimensional lattices $\Lambda_2=\langle (0, 2, 4, 4),(4, 1, 1, 4),(4, 3, 0, 4), (4, 3, 3, 0)\rangle$ and $\Lambda_3=\langle (5, 2, 0, 3),(5, 5, 0, 2),\linebreak(0, 3, 1, 4),(1, 2, 4, 3) \rangle$ are randomly chosen index 256 sublattices of $\Z^4$. We also consider the structured lattices $\Lambda_1=4\Z^4$ and $\Lambda_4=2\langle H_4\rangle$ (where $H_4$ is a Hadamard matrix). The eight-dimensional lattices are $2\Z^8$ and $2E_8$. It is evident from the figures that the tightness of inequality (\ref{inequality:FG}) depends on the lattice and dimension, as well as the index $[\Lambda_b:\Lambda_e]$. Furthermore, the discrepancy by the factor $\left(\frac{\pi}{\sqrt{6}}\right)^n$ in the limit as $\gamma_e\to 0$ is noticeable for small indices $[\Lambda_b:\Lambda_e]$. This discrepancy would disappear if one would consider a more accurate probability model for a finite constellation. Note also that we are not interested in the tightness of the bound (\ref{inequality:FG}) as such, but rather, in the extent to which the minimization of the upper bound correlates with the minimization of the actual ECDP.

The figures suggest that the orthogonal lattice $\Z^n$ gives the loosest bound of the chosen lattices. This can be explained by the fact that the inequality $\prod_{i=1}^n\frac{1}{(1+\gamma x_i^2)^{\frac{3}{2}}}\leq \prod_{i=1}^n\frac{1}{1+\frac{3}{2}\gamma x_i^2}$ is tighter when the coordinates of $x$ are closer to equal in absolute value. Fortunately, the lattices for which the bound (\ref{inequality:FG}) is tighter are also ones that have a small $F$-function, at least in the examples considered. 



\begin{figure}[H]
     \centering
     \begin{subfigure}[b]{0.49\textwidth}
         \centering
         \includegraphics[width=\textwidth]{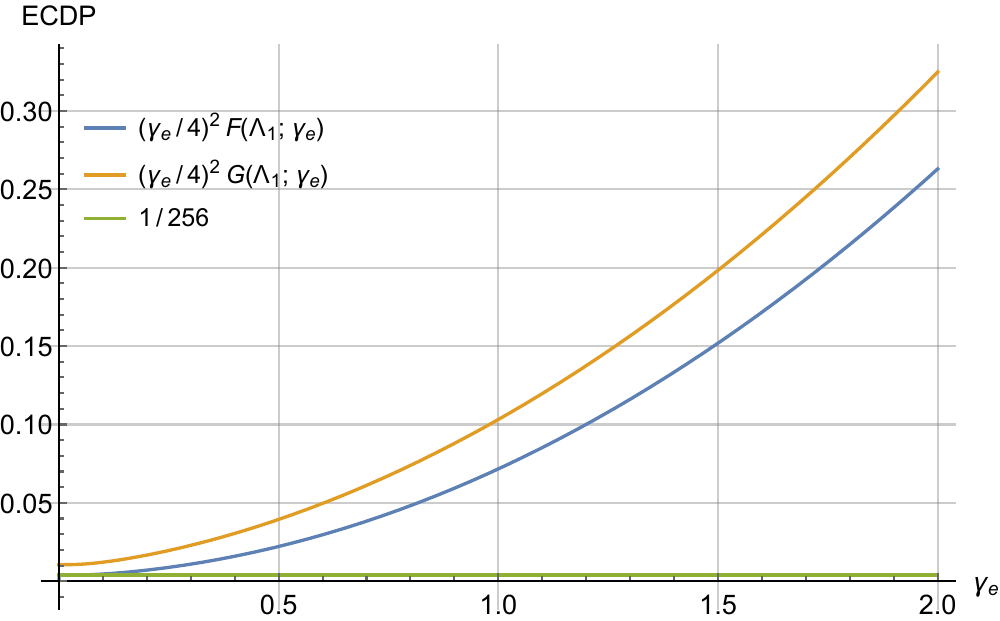}
         \caption{$\Lambda_1=4\Z^4$}
         \label{fig:a1}
     \end{subfigure}
     \hfill
     \begin{subfigure}[b]{0.49\textwidth}
         \centering
         \includegraphics[width=\textwidth]{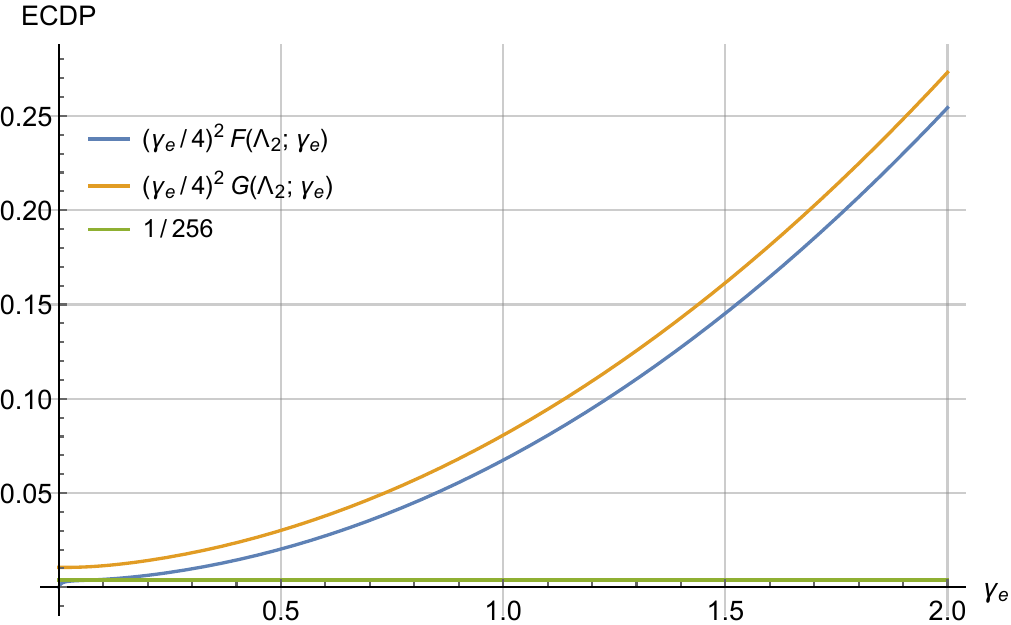}
         \caption{$\Lambda_2$}
         \label{fig:a2}
     \end{subfigure}
     \hfill
     \begin{subfigure}[b]{0.49\textwidth}
         \centering
         \includegraphics[width=\textwidth]{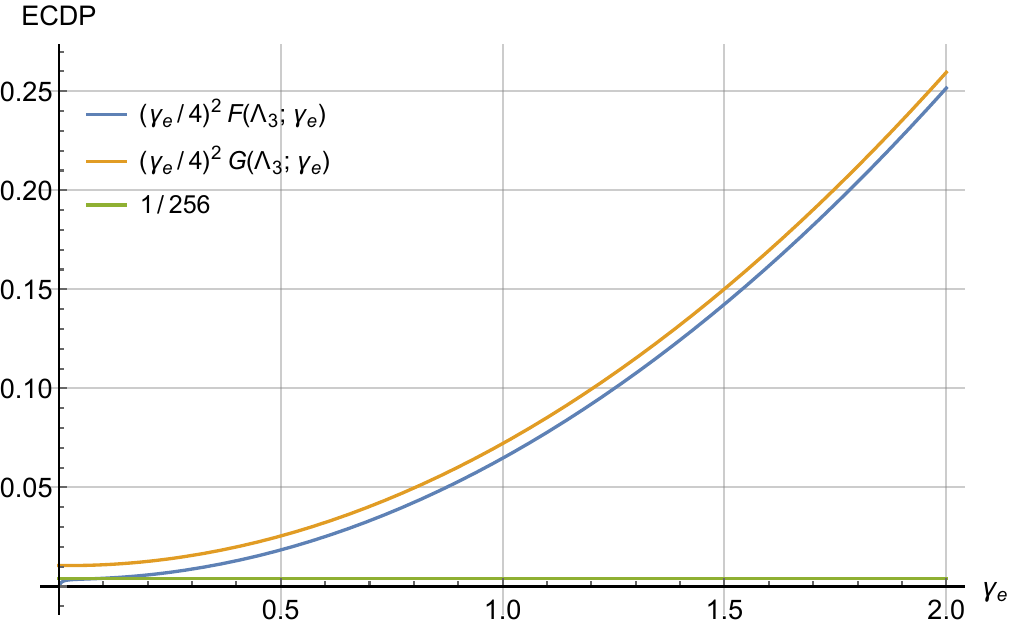}
         \caption{$\Lambda_3$}
         \label{fig:a3}
     \end{subfigure}
     \begin{subfigure}[b]{0.49\textwidth}
         \centering
         \includegraphics[width=\textwidth]{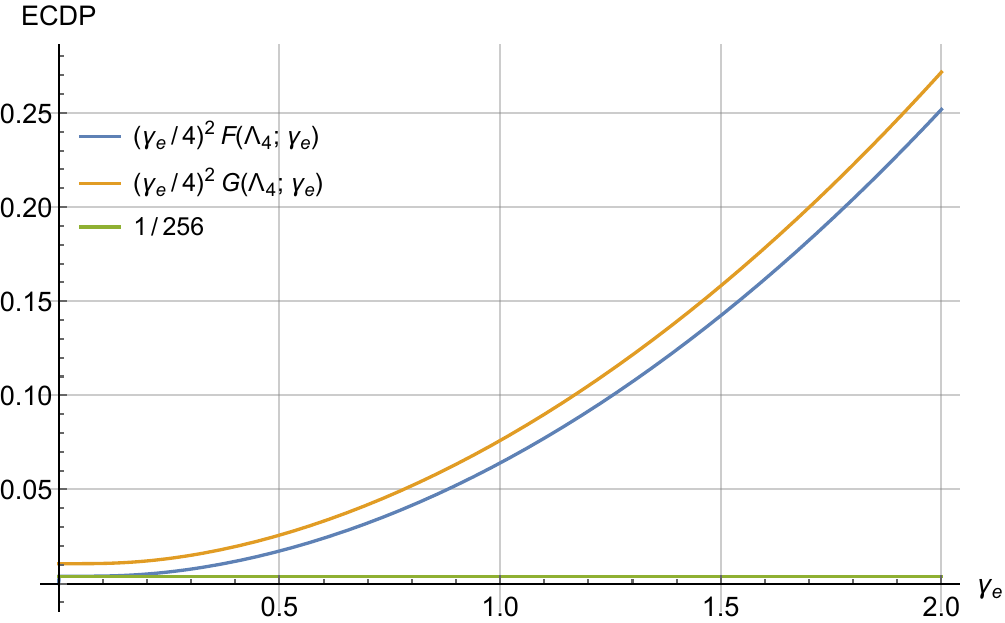}
         \caption{$\Lambda_4=\langle 2H_4\rangle$ ($H_4$ is a Hadamard matrix)}
         \label{fig:a4}
     \end{subfigure}
        \caption{Probability upper bounds from (\ref{eq:CDP}) and Corollary \ref{cor:probability_bounds} for four index 256 sublattices $\Lambda_i$ of $\Z^4$.}
        \label{fig:FG_comparison}
\end{figure}

\begin{figure}[H]
     \centering
     \begin{subfigure}[b]{0.49\textwidth}
         \centering
         \includegraphics[width=\textwidth]{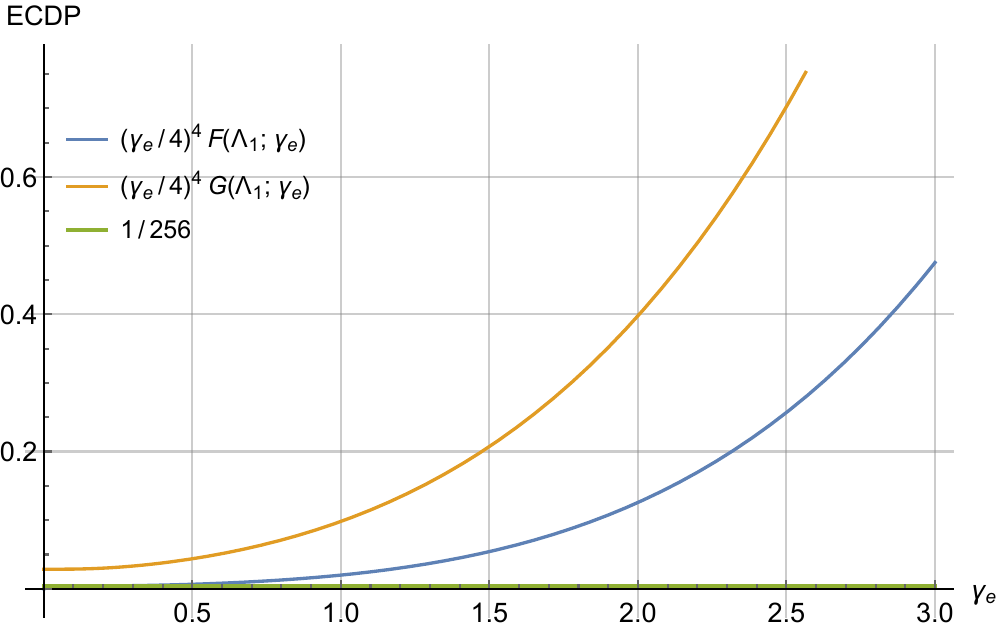}
         \caption{$\Lambda_1=2\Z^8$}
         \label{fig:a18}
     \end{subfigure}
     \hfill
     \begin{subfigure}[b]{0.49\textwidth}
         \centering
         \includegraphics[width=\textwidth]{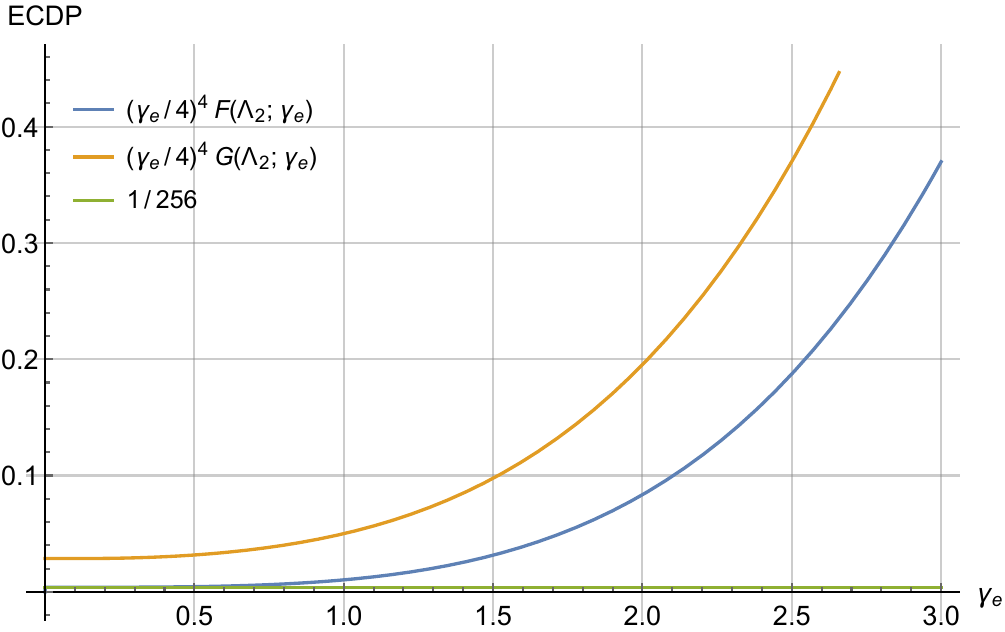}
         \caption{$\Lambda_2=2 E_8$}
         \label{fig:a28}
     \end{subfigure}
        \caption{Probability upper bounds from (\ref{eq:CDP}) and Corollary \ref{cor:probability_bounds} for two volume 256 lattices $\Lambda_i\subseteq\R^8$.}
        \label{fig:FG_comparison2}
\end{figure}
}

\section{Dense lattice packings of cross-polytopes}
\label{sec:dense_crosspolytopes}

In this section, motivated by the wiretap setting and minimization of $\ell^1$-norm theta functions in the small $q$-range, we consider the problem of densely lattice packing the $n$-dimensional cross-polytope in $\R^n$. Ideally, dense lattice packings of the cross-polytope should give good lattice codes for the fading wiretap channel, since they have small $\Theta_1$-function for sufficiently small $q$.

Let $K\subseteq\R^n$ be a centrally symmetric convex body. By a packing of $K$ we mean a family $\mathcal{P}=\{K_i\}_{i\in I}$ of translates (i.e., $K_i=K+s_i$ for some $s_i\in \R^n$) of $K$ such that the interiors of $K_i$ and $K_j$ are disjoint for all $i,j\in I$, $i\neq j$. A lattice packing is a packing consisting of the translates of $K$ by vectors in a lattice $\Lambda$ in $\R^n$. Informally, the density $\Delta(\mathcal{P})$ of a packing $\mathcal{P}$ is the proportion of space occupied by the elements in the packing. The packing density $\Delta_n(K)$ is the supremum of the packing densities over all possible packings of $K$ in $\R^n$. 

Let $K_n:=\{x\in \R^n:\sum_{i=1}^n|x_i|\leq 1\}$ denote the $n$-dimensional cross-polytope. Recall that the volume of $K_n$ is given by $\text{vol}(K_n)=\frac{2^n}{n!}$, and given a lattice $\Lambda\subseteq \R^n$, the induced lattice packing of $K_n$ has density
\begin{equation}
\label{eq:lattice_crosspacking_density}
    \Delta=\frac{\lambda_{1}(\Lambda)^n}{n!\text{vol}(\Lambda)},
\end{equation}
where $\lambda_1(\Lambda):=\min\{\norm{x}_1:x\in \Lambda\setminus\{0\}\}$ is the length of the shortest non-zero lattice vector in the $\ell^1$-metric.

The Minkowski--Hlawka theorem implies the existence of lattice packings of $K_n$ of density $\log(\Delta)\geq -\frac{n}{2}+o(n)$ as $n\to \infty$. Constructive lattice packings of $K_n$ having density $\log(\Delta)\geq -n\log\log(n)+O(n)$ were described by Rush \cite{rush1991constructive}. In the other direction, Fejes T\'oth, Fodor and V\'igh \cite{toth2015packing} proved that the packing density satisfies $\Delta_n(K_n)\leq c^n$ for some constant $0\leq c\leq 0.87$ as $n\to \infty$, a bound which follows from an asymptotic bound on the packing density of the $\ell^2$-ball and by inscribing a maximum sized ball inside the cross-polytope. 

In small dimensions, the exact packing density of $K_n$ is only known in dimensions $1,2$ and $4$, where the packing density is $1$. The lattice packing density of $K_3$ was proved to be $\frac{18}{19}$ by Minkowski \cite{minkowski}, but the only non-trivial non-lattice bound on the packing density of $K_3$ is due to Graver, Elser and Kallus \cite{gravel2011upper}: $\Delta_3(K_3)\leq 1-1.4\cdot 10^{-12}$. It should be noted that in dimension 4, the tiling of $\R^4$ by $K_4$ is a non-lattice tiling, achieved by the 16-cell honeycomb. The lattice packing density of $K_4$ is unknown: previously, the densest known four-dimensional lattice packing of the cross-polytope was due to Cools and Lyness \cite{cools2001three}. The lattice they found has a packing density of $\Delta = \frac{512}{621}\approx 0.824477$. The lattice found by our methods, described in Section \ref{sec:cross-polytope-program}, has $\Delta\approx0.824858$. Non-trivial upper bounds on $\Delta_n(K_n)$ in small dimensions $n\geq 7$ were derived by Fejes T\'oth, Fodor and V\'igh \cite{toth2015packing} using a method of Blichfeldt.

\begin{table}[]
    \centering
    \small
    {
    \begin{tabular}{|c|c|c|c|c|c|}
    \hline
        $n$ & Generator matrix & $\lambda_1$ & $\Delta$ & $\kappa_1$ & Source\\[0.7ex] 
         \hline
         $1$ & Identity & $1$ & $1$ & $2$ & Trivial \\[0.7ex] 
         \hline
         $2$ & Hadamard matrix &$\sqrt{2}$ & $1$ & $8$ & Trivial \\[0.7ex] 
         \hline
         $3$ & $\text{circ}(3,-2,1)$ & $1.78467$ & $0.947368$ & $14$ & Minkowski \cite{minkowski} \\[0.7ex] 
         \hline
         $4$ & $\begin{pmatrix}
             20 & 53 & -53 & -42 \\
             -62 & -22 & 42 & -42 \\
             -22 & -22 & -62 & 62 \\
             20 & 53 & 42 & 53
         \end{pmatrix}$ & $2.10934$ & $0.824858$ & $30$ & Nonlinear optimization \\[0.7ex] 
         \hline
         $5$ & $\text{circ}(-1,-4,4,5,6)$ & $2.41383$ & $0.682885$ & $50$ & Cools and Govaert \cite{cools2003five}\\[0.7ex] 
         \hline
         $6$ & $\text{circ}(-3,-4,5,4,9,3)$ & $2.69874$ & $0.536574$ & $72$ & Cools and Govaert \cite{cools2003five}\\[0.7ex] 
         \hline
    \end{tabular}
    }
    \caption{Dense lattice packings of cross-polytopes in small dimensions.}
    \label{tab:lattice_cross_packings_small_dim}
\end{table}

Table \ref{tab:lattice_cross_packings_small_dim} shows the densest lattice cross-polytope packings that we are aware of in dimensions $1$--$6$.  The five- and six-dimensional lattices were found by Cools and Govaert \cite{cools2003five} by a computer search, where the search was restricted to the space of lattices generated by circulant and skew-circulant matrices. Next, we will review some tools and results from convex and discrete geometry that will become useful when introducing our method for finding dense packings of convex bodies (especially cross-polytopes) in $\R^n$.


\subsection{Tools from convex and discrete geometry}

    A good reference for the material contained in this section is the book \cite{gruber2007convex}. Let $K\subseteq\R^n$ be a convex body, symmetric about the origin. Then $K$ induces a norm $\norm{\cdot}_K:\R^n\mapsto \R_{\geq 0}$ given by $\norm{x}_K:=\inf\{t\in \R_{\geq 0}:x\in tK\}$ for all $x\in \R^n$. Conversely, if $\norm{\cdot}$ is a norm-function on $\R^n$, then $K_{\norm{}}:=\{x\in\R^n:\norm{x}\leq 1\}$ is a convex body, due to the triangle inequality. A lattice $\Lambda\subseteq\R^n$ is called admissible with respect to $K$, if $\text{int}(K)\cap\Lambda=\{0\}$, or equivalently, $\norm{x}_K\geq 1$ for all $x\in\Lambda\setminus\{0\}$. An admissible lattice $\Lambda$ with respect to $K$ is called (locally) critical, if the volume $\vol{\Lambda}$ is a (local) minimum; in other words, any (slight) perturbation of $\Lambda$ which keeps the lattice admissible increases its volume. Furthermore, $\Lambda$ is admissible with respect to $K$ if and only if $\{x+\frac{1}{2}K:x\in \Lambda\}$ forms a lattice packing of $\frac{1}{2}K$, and criticality of $\Lambda$ amounts to optimality of the corresponding packing.

    Given a critical lattice $\Lambda$ with respect to a centrally symmetric convex body $K$, we are interested in the structure of the set $\Lambda\cap \partial K$, where $\partial K=K\setminus\text{int}(K)$ denotes the boundary of $K$. Define the \emph{minimal vectors} of $\Lambda$ with respect to a given norm $\norm{\cdot}_K$ induced by $K$ as the set $\mathcal{M}_K(\Lambda):=\{x\in \Lambda\setminus\{0\}:\norm{x}_K=1\}$. If $\mathcal{M}_K(\Lambda)$ contains $n$ $\R$-linearly independent vectors, then we say that the lattice is \emph{well-rounded} with respect to the norm $\norm{\cdot}_K$. If it further holds that $\Lambda=\text{span}_{\Z}(\mathcal{M}_K(\Lambda))$ we say that the lattice is \emph{strongly well-rounded}. An even stronger condition is that $\Lambda$ has a basis consisting of vectors of $\mathcal{M}_K(\Lambda)$; in this case, we say that $\Lambda$ possesses a basis of minimal vectors. It is a well-known fact that these three notions of well-roundedness are different already when $K$ is the familiar $\ell^2$-ball $\{x\in \R^n:\norm{x}_2\leq 1\}$. The kissing number $\kappa_K(\Lambda)$ of $\Lambda$ with respect to the norm $\norm{\cdot}_K$ is the cardinality of $\mathcal{M}_K(\Lambda)$. One might wonder whether locally critical lattices are always well-rounded. This is always the case, as the next proposition shows. 

\begin{prop}
\label{prop:well-rounded_convexbody}
    Let $K\subseteq\R^n$ be a convex body, symmetric about the origin. Let $\Lambda$ be a locally critical lattice with respect to $K$. Then $\Lambda$ is well-rounded with respect to $\norm{\cdot}_K$.
\end{prop}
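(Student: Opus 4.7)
The plan is to argue by contradiction. Suppose $\Lambda$ is locally critical but not well-rounded, so that $V := \mathrm{span}_{\R}(\mathcal{M}_K(\Lambda))$ has dimension $k < n$. The goal is to produce a one-parameter family of $K$-admissible lattices that converge to $\Lambda$ as $\epsilon \to 0^+$ but have strictly smaller volume, contradicting local criticality. Concretely, I would fix any linear complement $W$ of $V$ in $\R^n$ and, for small $\epsilon > 0$, define the invertible linear map $T_\epsilon : \R^n \to \R^n$ that acts as the identity on $V$ and as multiplication by $1-\epsilon$ on $W$. Then $\det T_\epsilon = (1-\epsilon)^{n-k} < 1$, so $\vol(T_\epsilon(\Lambda)) < \vol(\Lambda)$, and $T_\epsilon(\Lambda) \to \Lambda$ as $\epsilon \to 0^+$.

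The central step is to verify that $T_\epsilon(\Lambda)$ is $K$-admissible for all sufficiently small $\epsilon$. Decompose any $y \in \R^n$ as $y = v + w$ with $v \in V$, $w \in W$; then $T_\epsilon(y) = y - \epsilon w$. The projection $y \mapsto w$ is a continuous linear map, so by equivalence of norms there is a constant $C > 0$ with $\norm{w}_K \leq C \norm{y}_K$ for every $y$, yielding
$$\norm{T_\epsilon(y)}_K \geq (1 - \epsilon C)\norm{y}_K$$
by the triangle inequality. The main obstacle is that this estimate must be leveraged to give $\norm{T_\epsilon(y)}_K \geq 1$ simultaneously for every one of the infinitely many nonzero $y \in \Lambda$. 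I would split into three cases: if $y \in \mathcal{M}_K(\Lambda)$, then $y \in V$ and $T_\epsilon(y) = y$, so $\norm{T_\epsilon(y)}_K = 1$ trivially. If $\norm{y}_K \geq 2$, then $\norm{T_\epsilon(y)}_K \geq 2(1-\epsilon C) \geq 1$ whenever $\epsilon \leq \tfrac{1}{2C}$. For the remaining lattice vectors with $1 < \norm{y}_K < 2$, discreteness of $\Lambda$ and boundedness of $K$ ensure the set is finite, so $\eta_0 := \min\{\norm{y}_K - 1 : y \in \Lambda,\ 1 < \norm{y}_K < 2\} > 0$ is well defined, and then $\norm{T_\epsilon(y)}_K \geq (1 + \eta_0) - 2\epsilon C \geq 1$ provided $\epsilon \leq \tfrac{\eta_0}{2C}$.

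Taking $\epsilon$ smaller than the minimum of the thresholds above, $T_\epsilon(\Lambda)$ is $K$-admissible with $\vol(T_\epsilon(\Lambda)) = (1-\epsilon)^{n-k}\vol(\Lambda) < \vol(\Lambda)$, contradicting local criticality. Therefore $V = \R^n$, which is exactly the assertion that $\Lambda$ is well-rounded with respect to $\norm{\cdot}_K$.
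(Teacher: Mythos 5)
Your proof is correct and follows essentially the same route as the paper: shrink the lattice by a factor $1-\epsilon$ along a complement of $V=\mathrm{span}_{\R}(\mathcal{M}_K(\Lambda))$, check that admissibility is preserved for small $\epsilon$ (the paper gets this from a uniform gap $\norm{x}_K\geq 1+\delta$ off the minimal vectors, which your three-case analysis spells out in more detail), and contradict local criticality via $\det T_\epsilon=(1-\epsilon)^{n-k}<1$. The only cosmetic difference is that you allow an arbitrary complement $W$ where the paper takes $V^{\perp}$, which changes nothing.
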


\begin{proof}
     Suppose that $V:=\text{span}_{\R} \mathcal{M}_K(\Lambda)$ is a proper vector subspace of $\R^n$. Write $\R^n=V\oplus V^\perp$ where $V^\perp=\{x\in \R^n:x\cdot y=0 \text{ for all $y\in V$}\}$. Consider the linear transformation $\mathcal{T}: V\oplus V^\perp \to V\oplus V^\perp$, $(v,v')\mapsto (v,(1-\varepsilon)v')$ where $\varepsilon>0$ is a sufficiently small real number such that $\norm{\mathcal{T}(x)}_K\geq 1$ holds for all $x\in \Lambda\setminus\{0\}$. Such a transformation exists, since $\mathcal{T}$ acts as the identity on $\mathcal{M}_K(\Lambda)$, and there exists a $\delta>0$ such that $\norm{x}_K\geq 1+\delta$ for all nonzero $x\in \Lambda\setminus \mathcal{M}_K(\Lambda)$, since $\Lambda$ is a discrete subgroup. It remains to show that the lattice $\mathcal{T}(\Lambda)$ has smaller volume than $\Lambda$. This follows from the fact that the matrix representation of $\mathcal{T}$ with respect to a basis $B$ of $V\oplus V^\perp$ is $[\mathcal{T}]_B=\begin{pmatrix}
          I_\ell & 0 \\
          0 & (1-\varepsilon)I_m
     \end{pmatrix}$
     where $\ell=\dim V$ and $m=n-\ell$. This matrix has determinant $\det([\mathcal{T}]_B)=(1-\varepsilon)^m<1$, so $\vol{\mathcal{T}(\Lambda)}=(1-\varepsilon)^m\vol{\Lambda}<\vol{\Lambda}$, which contradicts the fact that $\Lambda$ is locally critical.
\end{proof}

In fact, one can say more about the cardinality of $\mathcal{M}_K(\Lambda)$. The following result by Swinnerton-Dyer \cite{swinnerton1953extremal} gives a lower bound on the number of minimal vectors in a locally optimal lattice packing of any convex body.

\begin{prop}
    \label{prop:swinnerton-dyer_kissing_bound}
    Suppose that $K\subseteq \R^n$ is a convex body, symmetric about the origin. If $\Lambda\subseteq\R^n$ is a locally critical lattice with respect to $K$, then $\kappa_K(\Lambda)\geq n(n+1)$.  
\end{prop}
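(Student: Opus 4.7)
The plan is a proof by contradiction: I assume $\kappa_K(\Lambda) < n(n+1)$ and construct an admissible perturbation of $\Lambda$ with strictly smaller volume. Under this assumption, the minimal vectors split into $N < n(n+1)/2$ antipodal pairs, so I pick representatives $v_1, \ldots, v_N \in \mathcal{M}_K(\Lambda)$ and, for each, use the supporting hyperplane theorem to produce a linear functional $\ell_i \in (\R^n)^*$ with $\ell_i(v_i) = 1$ and $\ell_i(x) \leq \norm{x}_K$ for every $x \in \R^n$. The pivotal dimension count is that the space $\mathrm{Sym}_n(\R)$ of real symmetric $n \times n$ matrices has dimension $n(n+1)/2 > N$, so the linear map
\[
\Phi: \mathrm{Sym}_n(\R) \to \R^N,\qquad \Phi(S) = (\ell_1(Sv_1), \ldots, \ell_N(Sv_N))
\]
has nontrivial kernel. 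I pick a nonzero $S \in \ker \Phi$ and consider the one-parameter family $T_t := I + tS$ for small $t \in \R$.

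Admissibility of $T_t\Lambda$ at the minimal vectors is immediate from the supporting hyperplane inequality: for every $t$ and every $i$,
\[
\norm{T_t v_i}_K \;=\; \norm{v_i + tSv_i}_K \;\geq\; \ell_i(v_i + tSv_i) \;=\; 1 + t\,\ell_i(Sv_i) \;=\; 1.
\]
For the remaining vectors $w \in \Lambda \setminus \mathcal{M}_K(\Lambda)$, discreteness ensures that only finitely many have $K$-norm below any fixed level, so some $\delta > 0$ satisfies $\norm{w}_K \geq 1 + \delta$ for every non-minimal $w$ in a fixed bounded region containing the potentially problematic vectors; combined with the reverse triangle inequality and the equivalence of norms on $\R^n$, this forces $\norm{T_t w}_K \geq 1$ for all sufficiently small $|t|$, so $T_t\Lambda$ is admissible.

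The main obstacle, requiring the most care, is producing a strict decrease in volume, since the first-order term of $\det(T_t)$ might vanish. I would expand
\[
\det(I + tS) \;=\; 1 + t\,\mathrm{tr}(S) + \tfrac{t^2}{2}\bigl(\mathrm{tr}(S)^2 - \mathrm{tr}(S^2)\bigr) + O(t^3)
\]
and split into two cases. If $\mathrm{tr}(S) \neq 0$, I pick the sign of $t$ so that $t\,\mathrm{tr}(S) < 0$, yielding $\det(T_t) < 1$ for small $|t|$. If instead $\mathrm{tr}(S) = 0$, then since $S$ is a nonzero real symmetric matrix its eigenvalues $\lambda_i(S)$ are real and not all zero, so $\mathrm{tr}(S^2) = \sum_i \lambda_i(S)^2 > 0$, and the expansion reduces to $\det(T_t) = 1 - \tfrac{t^2}{2}\mathrm{tr}(S^2) + O(t^3) < 1$ on a punctured neighborhood of $0$. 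In either case $T_t\Lambda$ is admissible with $\vol(T_t\Lambda) = |\det T_t|\,\vol(\Lambda) < \vol(\Lambda)$, contradicting the local criticality of $\Lambda$. Therefore $N \geq n(n+1)/2$ and $\kappa_K(\Lambda) = 2N \geq n(n+1)$.
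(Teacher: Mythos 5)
Your proof is correct. The paper does not prove this proposition at all --- it simply cites Swinnerton-Dyer \cite{swinnerton1953extremal} --- and what you have written is a faithful reconstruction of Swinnerton-Dyer's original argument: supporting functionals at the $N$ representative minimal vectors, the dimension count $\dim\mathrm{Sym}_n(\R)=n(n+1)/2>N$ to produce a nonzero symmetric $S$ with $\ell_i(Sv_i)=0$, admissibility of $(I+tS)\Lambda$ via the $\delta$-gap for non-minimal vectors (the same discreteness argument the paper itself uses in proving Proposition~\ref{prop:well-rounded_convexbody}), and the volume decrease from the expansion of $\det(I+tS)$. You correctly identified the one point where the restriction to \emph{symmetric} matrices is actually needed: when $\mathrm{tr}(S)=0$ the second-order coefficient is $-\tfrac{1}{2}\mathrm{tr}(S^2)$, which is strictly negative precisely because a nonzero symmetric matrix has $\mathrm{tr}(S^2)=\sum_i\lambda_i(S)^2>0$ (a skew-symmetric perturbation, by contrast, would give no volume gain). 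The antipodal vectors $-v_i$ are covered automatically since $\norm{\cdot}_K$ is symmetric, so nothing is missing there either.
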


In the other direction, a result by Minkowski \cite{minkowski1957diophantische} states that for any convex body $K$ and admissible lattice $\Lambda$, $\kappa_K(\Lambda)\leq 3^{n}-1$. If $K$ is strictly convex, then furthermore $\kappa_K(\Lambda)\leq 2^{n+1}-2$.

\subsection{A program for finding locally optimal lattice packings of the cross-polytope}

\label{sec:cross-polytope-program}

Minkowski's original proof of the optimality of the three-dimensional lattice cross-polytope packing shown in Table \ref{tab:lattice_cross_packings_small_dim} uses information about which integral linear combinations of basis vectors of a given critical lattice must lie on the boundary of the cross-polytope. Minkowski's method works more generally for convex bodies in three dimensions. In \cite{betke2000densest}, based on the work of Minkowski, Betke and Henk presented an algorithm for computing the densest lattice packing of any polytope in dimension 3. However, in dimensions greater than or equal to four, there is no known classification of configurations of minimal vectors in critical lattices. If $B=\{b_1,\dots, b_n\}$ is a basis for the lattice $\Lambda\subseteq \R^n$ which has minimum $1$ with respect to the norm induced by   $K\subseteq \R^n$, then by minimal configuration (with respect to this basis and norm) we mean a set $M\subseteq \Z^n$ for which $m=(m_1,\dots,m_n)\in M$ if and only if $\norm{\sum_{i=1}^n m_i b_i}_K=1$.

Let $K^*=\{y\in\R^n: x\cdot y\leq 1 \text{ for all $x\in K$}\}$ denote the \emph{polar body} of $K$. For instance, the polar body of the cross-polytope is $K_n^*=\{x\in \R^n: |x_i|\leq 1 \text{ for all $i=1,\dots,n$}\}$, the cube with side-length 2, centered at the origin (the unit ball with respect to the max-norm). The following lemma restricts the size of the coefficients of a vector $v\in \R^n$, when expressed as a linear combination of a basis of $\R^n$, in terms of the polar norm of the dual basis vectors and the length of $v$ in the original norm.

\begin{lem}
    \label{lem:polar_bound}
    Let $\{b_1,\dots, b_n\}$ be a basis of $\R^n$, with dual basis $\{b_1^*,\dots, b_n^*\}$ and suppose that $\norm{\sum_{i=1}^n c_i b_i}_K\leq \ell$ for some real numbers $c_i$, $i=1,\dots, n$ and positive constant $\ell$. Then $|c_i|\leq \norm{b_i^*}_{K^*} \ell$ for all $i=1,\dots, n$.
\end{lem}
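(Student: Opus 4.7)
The plan is to exploit the defining property of the dual basis, namely $b_i^*\cdot b_j=\delta_{ij}$, together with the standard duality inequality that a norm and its polar norm satisfy $|x\cdot y|\leq \norm{x}_K\norm{y}_{K^*}$. Setting $v=\sum_{i=1}^n c_i b_i$, expanding $b_i^*\cdot v$ using the dual basis relation yields $b_i^*\cdot v=c_i$, so it suffices to control $|b_i^*\cdot v|$ in terms of $\norm{v}_K$ and $\norm{b_i^*}_{K^*}$.

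First I would establish the duality inequality as a short lemma (or inline observation). Given nonzero $x,y\in\R^n$, set $a=\norm{x}_K$ and $b=\norm{y}_{K^*}$. Then $x/a\in K$ and $y/b\in K^*$, so by definition of the polar body $x/a\cdot y/b\leq 1$. Since $K$ is centrally symmetric, $-x/a\in K$ as well, which gives the matching lower bound and hence $|x\cdot y|\leq \norm{x}_K\norm{y}_{K^*}$. Applying this to $x=b_i^*$ and $y=v$ gives
\[
|c_i|=|b_i^*\cdot v|\leq \norm{b_i^*}_{K^*}\norm{v}_K\leq \norm{b_i^*}_{K^*}\ell,
\]
which is exactly the claim.

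I do not anticipate any real obstacle here: the statement is essentially a repackaging of the Hölder-type inequality between a centrally symmetric norm and its dual. The only small subtlety is ensuring that the absolute value (rather than just one-sided bound) is justified, which is where the central symmetry of $K$ (and hence of $K^*$, which is used implicitly through the fact that $\norm{\cdot}_{K^*}$ is a genuine norm) enters. Degenerate cases where $v=0$ or $b_i^*=0$ are trivial and can be dismissed in one line.
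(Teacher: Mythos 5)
Your proof is correct and follows exactly the paper's route: extract $c_i=b_i^*\cdot v$ from the dual basis relation and apply the polar duality inequality $|x\cdot y|\leq\norm{x}_K\norm{y}_{K^*}$, which the paper likewise leaves as an elementary consequence of the definition of the polar body. The only blemish is a harmless relabeling slip: to land on $\norm{b_i^*}_{K^*}\norm{v}_K$ you should take $x=v$ and $y=b_i^*$ in the inequality as you stated it, not the other way around.
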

\begin{proof}
    Note that $c_i=b_i^*\cdot (\sum_{j=1}^n c_j b_j)$ and apply the inequality $|x\cdot y|\leq \norm{x}_K\norm{y}_{K^*}$ ($x,y\in \R^n$) which is elementary to verify using the definition of polar body.
\end{proof}

 We are interested in the case $K=K_n$, in which case the polar norm is the max-norm $\norm{x}_\infty=\max_{1\leq i\leq n}|x_i|$. The next lemma gives a first bound on the max-norm of dual basis vectors, when the basis vectors have unit $\ell^1$-norm.

\begin{lem}
    \label{lem:max_bound}
    Let $\{b_1,\dots, b_n\}$ be a basis of $\R^n$ such that $\norm{b_i}_1=1$ for all $i=1,\dots,n$ and let $\{b_1^*,\dots, b_n^*\}$ be the dual basis. Then $$\norm{b_i^*}_\infty\leq \frac{1}{|\det(B)|}\qquad\text{for all $i=1,\dots,n$}$$
    where $B$ is the matrix with rows $b_1,\dots,b_n$.
\end{lem}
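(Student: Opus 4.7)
The plan is to use Cramer's rule to express the entries of the dual basis vectors $b_i^*$ as minors of $B$ divided by $\det(B)$, and then bound these minors by Hadamard's inequality combined with the norm comparison $\|v\|_2 \leq \|v\|_1$.

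First I would record the matrix identity underlying the duality. Let $B^*$ denote the matrix with rows $b_1^*,\dots,b_n^*$. The defining relations $b_i \cdot b_j^* = \delta_{ij}$ give $B(B^*)^T = I$, so $B^* = (B^{-1})^T$. By Cramer's rule, the $(j,i)$-entry of $B^{-1}$ equals $\frac{(-1)^{i+j}}{\det(B)} M_{ij}$, where $M_{ij}$ denotes the determinant of the $(n-1)\times(n-1)$ minor obtained from $B$ by deleting row $i$ and column $j$. Consequently
$$(b_i^*)_j \;=\; \frac{(-1)^{i+j}\, M_{ij}}{\det(B)},$$
so the claim reduces to showing $|M_{ij}| \leq 1$ for all $i,j$.

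Next I would bound each minor. The rows of the matrix computing $M_{ij}$ are exactly the vectors $b_k$ (for $k \neq i$) with their $j$-th coordinate deleted; call these truncated vectors $\tilde{b}_k \in \R^{n-1}$. Deleting a coordinate cannot increase the $\ell^1$-norm, so $\|\tilde{b}_k\|_1 \leq \|b_k\|_1 = 1$. Combining Hadamard's inequality with the elementary estimate $\|v\|_2 \leq \|v\|_1$ for $v \in \R^{n-1}$ gives
$$|M_{ij}| \;\leq\; \prod_{k \neq i} \|\tilde{b}_k\|_2 \;\leq\; \prod_{k \neq i} \|\tilde{b}_k\|_1 \;\leq\; 1.$$
Plugging this back into the Cramer expression yields $|(b_i^*)_j| \leq 1/|\det(B)|$ for every $j$, hence $\|b_i^*\|_\infty \leq 1/|\det(B)|$.

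The proof is essentially routine once one identifies the right tool: there is no serious obstacle beyond keeping the row/column conventions of Cramer's rule straight and recognizing that Hadamard gives a tight-enough estimate when the rows have unit $\ell^1$-norm. In particular, no structural property of the cross-polytope beyond $\|v\|_2 \leq \|v\|_1$ is needed.
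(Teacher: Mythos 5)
Your proof is correct and follows essentially the same route as the paper: expand $b_i^*$ via Cramer's rule into cofactors of $B$, then bound each $(n-1)\times(n-1)$ minor by $1$ using Hadamard's inequality together with $\norm{v}_2\leq\norm{v}_1$ and the fact that deleting a coordinate does not increase the $\ell^1$-norm. No issues.
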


\begin{proof}
    Write $b_i^*=\frac{1}{\det(B)}\sum_{j=1}^n (-1)^{i+j} \det(B_{ij}) e_j$, where $B_{ij}$ denotes the $(n-1)\times (n-1)$ matrix obtained by deleting row $i$ and column $j$ in $B$. Since the rows of $B_{ij}$ have $\ell^1$-norm upper bounded by 1, Hadamard's inequality gives
    $$|\det(B_{ij})|\leq \prod_{k=1}^{n-1}\norm{(B_{ij})_k}_2 \leq \prod_{k=1}^{n-1}\norm{(B_{ij})_k}_1\leq 1,$$
    where $(B_{ij})_k$ denotes the $k$:th row of $B_{ij}$. The claim follows.
\end{proof}

Equality holds in Lemma \ref{lem:max_bound} if $b_i$ are the standard basis vectors. A consequence of the above lemma is the following restriction on minimal configurations in lattices with minimal $\ell^1$-norm bases. 

\begin{cor}
    \label{cor:max_bound_critical}
    If $\Lambda\subseteq\R^n$ is a lattice possessing a basis $\{b_1,\dots, b_n\}$ of minimal vectors with respect to the $\ell^1$-norm, then $m=\sum_{i=1}^n m_i b_i\in \mathcal{M}_{K_n}(\Lambda)$ implies that $|m_i|\leq n!$.
\end{cor}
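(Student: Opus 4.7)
The plan is to chain together the two preceding lemmas and then extract a lower bound on $|\det(B)|$ from the fact that the lattice induces a valid cross-polytope packing. First I would fix the normalization: since $K_n$ is the unit ball of the $\ell^1$-norm, $\|\cdot\|_{K_n}=\|\cdot\|_1$, so the hypothesis ``basis of minimal vectors'' means $\|b_i\|_1=1$ for all $i$, and any $m=\sum_{i=1}^n m_i b_i\in\mathcal{M}_{K_n}(\Lambda)$ satisfies $\|m\|_1=1$. I would also note that the polar body is $K_n^{*}=\{x\in\R^n:\|x\|_\infty\le1\}$, hence $\|\cdot\|_{K_n^*}=\|\cdot\|_\infty$.

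Next, I would apply Lemma~\ref{lem:polar_bound} with the choices $c_i=m_i$ and $\ell=1$. This immediately yields
\[
  |m_i|\le \|b_i^*\|_{K_n^{*}}\cdot 1=\|b_i^*\|_\infty\qquad(i=1,\dots,n),
\]
where $\{b_1^*,\dots,b_n^*\}$ is the dual basis. Since the basis $\{b_1,\dots,b_n\}$ consists of vectors of unit $\ell^1$-norm, Lemma~\ref{lem:max_bound} applies and gives $\|b_i^*\|_\infty\le 1/|\det(B)|$. Combining these two estimates reduces the problem to bounding $|\det(B)|$ from below.

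For the lower bound on $|\det(B)|=\vol(\Lambda)$, I would use admissibility: because $\|x\|_1\ge1$ for every nonzero $x\in\Lambda$, the translates $\{x+\tfrac12 K_n:x\in\Lambda\}$ form a lattice packing of $\tfrac12K_n$. By formula~(\ref{eq:lattice_crosspacking_density}) the corresponding density is $\Delta=1/(n!\,\vol(\Lambda))$, and since any packing has density at most $1$, we obtain $\vol(\Lambda)\ge 1/n!$. Putting everything together,
\[
  |m_i|\le \|b_i^*\|_\infty \le \frac{1}{|\det(B)|}=\frac{1}{\vol(\Lambda)}\le n!,
\]
which is the desired bound. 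There is no real obstacle here; the mildly delicate point is just checking the normalization at the start (so that both $\|b_i\|_1=1$ and $\|m\|_1=1$ hold, allowing the application of Lemma~\ref{lem:polar_bound} with $\ell=1$) and recognizing that ``density $\le 1$'' is the right input to close the loop.
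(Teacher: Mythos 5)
Your proof is correct and follows exactly the route the paper takes: Lemma~\ref{lem:polar_bound} with $\ell=1$, then Lemma~\ref{lem:max_bound}, then the trivial density bound $\Delta=\frac{1}{n!\,\vol(\Lambda)}\leq 1$ to get $|\det(B)|\geq \frac{1}{n!}$. You have merely made explicit the normalization ($\norm{b_i}_1=\norm{m}_1=1$) that the paper leaves implicit.
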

\begin{proof}
    This follows from Lemmas \ref{lem:polar_bound} and \ref{lem:max_bound}, as well as the trivial bound $\Delta= \frac{1}{n! \text{vol}(\Lambda)}\leq 1$ on the packing density of the cross-polytope.
\end{proof}
We remark that the equivalence of norms on $\R^n$ allows one to derive similar bounds for arbitrary norms. Now, the bound $n!$ is not tight, since lattices with good cross-polytope packing density (small determinant) do not have minimal bases which are close to the standard basis vectors. However, the next lemma shows that locally critical lattices have many minimal vectors with small max-norm. Thus, if a lattice has a minimal basis consisting of such vectors, we get a better restriction than what Corollary \ref{cor:max_bound_critical} gives.

\begin{lem}
    \label{lem:max_norm_pidgeonhole_principle}
    Let $\Lambda$ be a locally optimal lattice packing of $K_n$. Then $\Lambda$ has at least $\frac{n(n-1)}{2}$ pairs of minimal vectors $v\in \mathcal{M}_{K_n}(\Lambda)$ satisfying $\norm{v}_\infty\leq \frac{1}{2}$.
\end{lem}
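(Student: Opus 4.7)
My plan is to combine Swinnerton-Dyer's kissing-number bound $\kappa_{K_n}(\Lambda) \geq n(n+1)$ from Proposition \ref{prop:swinnerton-dyer_kissing_bound} with an upper bound of $n$ on the number of pairs of minimal vectors whose representatives satisfy $\norm{v}_\infty > \frac{1}{2}$. The former yields at least $\frac{n(n+1)}{2}$ pairs in $\mathcal{M}_{K_n}(\Lambda)$, so subtracting $n$ will give the required $\frac{n(n+1)}{2} - n = \frac{n(n-1)}{2}$ pairs with $\norm{v}_\infty \leq \frac{1}{2}$.

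Any minimal $v$ satisfies $\norm{v}_1 = 1$; if furthermore $\norm{v}_\infty > \frac{1}{2}$ then the other coordinates contribute less than $\frac{1}{2}$ in total, so there is a unique index $i = i(v)$ with $|v_i| > \frac{1}{2}$, which I will call the \emph{dominant coordinate} of $v$. For a pair $\{v,-v\}$ with $\norm{v}_\infty > \frac{1}{2}$, exactly one representative is positive at its dominant coordinate, and the map $\{v,-v\} \mapsto i(v)$ is well-defined into $\{1,\dots,n\}$. I plan to show this map is injective, via the stronger statement that for each $i$ at most one minimal vector has $v_i > \frac{1}{2}$; this bounds the number of pairs with $\norm{v}_\infty > \frac{1}{2}$ by $n$.

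The heart of the proof is the one-per-coordinate claim. Suppose for contradiction that $v, w \in \mathcal{M}_{K_n}(\Lambda)$ are distinct with $v_i, w_i > \frac{1}{2}$, and assume $v_i \geq w_i$ without loss of generality. The identities $\sum_{j \neq i} |v_j| = 1 - v_i$ and $\sum_{j \neq i} |w_j| = 1 - w_i$ together with the coordinatewise triangle inequality give
\begin{align*}
\norm{v - w}_1 &= (v_i - w_i) + \sum_{j \neq i} |v_j - w_j| \\
&\leq (v_i - w_i) + (1 - v_i) + (1 - w_i) = 2 - 2 w_i < 1,
\end{align*}
since $w_i > \frac{1}{2}$. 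But $v - w \in \Lambda \setminus \{0\}$ and admissibility of $\Lambda$ forces $\norm{v-w}_1 \geq 1$, a contradiction, so $v = w$.

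The main subtlety I expect is spotting the telescoping in the displayed calculation: the two budgets $v_i - w_i$ and $(1-v_i)+(1-w_i)$ collapse to $2 - 2\min(v_i, w_i)$, which drops strictly below $1$ exactly when both vectors live in the cap $\{x : x_i > \tfrac{1}{2}\}$ near the vertex $e_i$ of $K_n$. Beyond Swinnerton-Dyer's bound on the kissing number, no deeper appeal to local criticality seems needed.
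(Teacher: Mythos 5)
Your proposal is correct and follows essentially the same route as the paper: Swinnerton-Dyer's bound gives $\tfrac{n(n+1)}{2}$ pairs, and the same telescoping estimate $\norm{v-w}_1\leq 2-2\min(v_i,w_i)<1$ shows that at most one pair per coordinate can have max-norm exceeding $\tfrac12$ (the paper phrases this via the pigeonhole principle rather than an injective ``dominant coordinate'' map, but the content is identical).
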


\begin{proof}
    Since $\Lambda$ is locally optimal, Proposition \ref{prop:swinnerton-dyer_kissing_bound} gives that there are at least $\frac{n(n+1)}{2}$ $\pm$-pairs of minimal vectors in $\Lambda$.
    Suppose that there are $>n$ pairs of minimal vectors in $\Lambda$ each of which has max-norm $>\frac{1}{2}$. Then by the pigeonhole principle, there exists $i\in\{1,2,\dots, n\}$ such that at least two of them, say $x,y\in\mathcal{M}_{K_n}(\Lambda)$  have $|x_i|,|y_i|>\frac{1}{2}$. Replacing $x,y$ by their negatives and swapping $x$ and $y$ if necessary, we may assume that $x_i\geq y_i>\frac{1}{2}$. But then
    \begin{align*}
        \norm{x-y}_1&=\sum_{j=1}^n |x_j-y_j|\leq |x_i-y_i|+\sum_{j\neq i} (|x_j|+|y_j|) \\
        &= |x_i-y_i|+2-|x_i|-|y_i|\\
        &= 2-2y_i<1,
    \end{align*}
    a contradiction.
\end{proof}
Using the previous lemma, we can strengthen Corollary \ref{cor:max_bound_critical}, assuming the lattice has a minimal basis consisting of vectors with max-norm upper bounded by $\frac{1}{2}$.

\begin{lem}
    \label{lem:dim4_max_bound}
    Let $\Lambda\subseteq\R^n$ be a lattice possessing a basis $\{b_1,\dots,b_n\}$ of minimal vectors with $\norm{b_i}_\infty\leq \frac{1}{2}$ for all $i=1,\dots,n$. Then $m=\sum_{i=1}^n m_i b_i\in\mathcal{M}_{K_n}(\Lambda)$ implies that $|m_i|\leq  2^{\frac{1-n}{2}}n!$.
\end{lem}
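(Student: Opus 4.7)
The plan is to refine the argument of Lemma~\ref{lem:max_bound} by exploiting the additional hypothesis $\norm{b_i}_\infty \leq \tfrac{1}{2}$ to sharpen the Hadamard-type bound on the cofactors of $B$, and then to combine this with the trivial packing density bound to control $1/|\det(B)|$.

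First I would apply Lemma~\ref{lem:polar_bound} with $K = K_n$, so that $K^* = K_n^*$ and the induced norm is $\norm{\cdot}_\infty$. Since $m \in \mathcal{M}_{K_n}(\Lambda)$ has $\ell^1$-norm $1$, the lemma yields $|m_i| \leq \norm{b_i^*}_\infty$ for every $i$. The task therefore reduces to bounding $\norm{b_i^*}_\infty$.

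Next, I would expand $b_i^*$ via the cofactor formula as in the proof of Lemma~\ref{lem:max_bound}, writing $b_i^* = \frac{1}{\det(B)} \sum_{j=1}^n (-1)^{i+j} \det(B_{ij}) e_j$. Each row of the minor $B_{ij}$ is obtained by deleting one coordinate of some $b_k$, so it still satisfies $\norm{(B_{ij})_k}_1 \leq 1$ and $\norm{(B_{ij})_k}_\infty \leq \tfrac{1}{2}$. The crucial observation is that any vector $v \in \R^{n-1}$ with $\norm{v}_1 \leq 1$ and $\norm{v}_\infty \leq \tfrac{1}{2}$ satisfies $\norm{v}_2 \leq \tfrac{1}{\sqrt{2}}$: indeed, $\norm{v}_2^2 = \sum v_\ell^2 \leq \norm{v}_\infty \cdot \norm{v}_1 \leq \tfrac{1}{2}$. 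Applying Hadamard's inequality to each $B_{ij}$ then gives
\begin{equation*}
|\det(B_{ij})| \leq \prod_{k=1}^{n-1} \norm{(B_{ij})_k}_2 \leq \left(\tfrac{1}{\sqrt{2}}\right)^{n-1} = 2^{-(n-1)/2}.
\end{equation*}

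Finally, I would invoke the trivial density bound $\Delta = \frac{1}{n!\,|\det(B)|} \leq 1$, which gives $\frac{1}{|\det(B)|} \leq n!$. Combining everything,
\begin{equation*}
\norm{b_i^*}_\infty \leq \frac{1}{|\det(B)|} \max_{j} |\det(B_{ij})| \leq n! \cdot 2^{-(n-1)/2} = 2^{(1-n)/2}\, n!,
\end{equation*}
which yields the required bound $|m_i| \leq 2^{(1-n)/2} n!$. The only nontrivial step is the sharpened $\ell^2$-bound on the rows of the cofactor matrices; the rest is bookkeeping that mirrors the proofs of Lemmas~\ref{lem:polar_bound}, \ref{lem:max_bound} and Corollary~\ref{cor:max_bound_critical}.
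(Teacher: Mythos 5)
Your proposal is correct and follows essentially the same route as the paper: cofactor expansion of $b_i^*$, a sharpened Hadamard bound $|\det(B_{ij})|\leq 2^{-(n-1)/2}$ using the extra hypothesis $\norm{b_i}_\infty\leq\tfrac12$, the trivial density bound $1/|\det(B)|\leq n!$, and Lemma~\ref{lem:polar_bound}. Your justification of the row bound via $\norm{v}_2^2\leq\norm{v}_\infty\norm{v}_1$ is a slightly cleaner way to get the same $1/\sqrt{2}$ estimate the paper uses.
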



\begin{proof}
    Let $B$ be the matrix with rows $b_1,\dots,b_n$. As in Lemma \ref{lem:max_bound}, we want to find an upper bound for $|\det(B_{ij})|$, where $B_{ij}$ is as in Lemma \ref{lem:max_bound}. Since the entries of $B_{ij}$ have absolute value upper bounded by $\frac{1}{2}$, and the rows have $\ell^1$-norm upper bounded by 1,
    \begin{align*}
        |\det(B_{ij})|\leq \prod_{k=1}^{n-1} \norm{(B_{ij})_k}_2\leq\left(\sqrt{2\cdot\left(\frac{1}{2}\right)^2}\right)^{n-1}=\frac{1}{2^{\frac{n-1}{2}}}.
    \end{align*}
    Therefore, $\norm{b_i^*}_\infty <\frac{|\det(B_{ij})|}{|\det(B)|}\leq \frac{n!}{2^{\frac{n-1}{2}}}$ and the claim follows from Lemma \ref{lem:polar_bound}. 
\end{proof}

Many of the minimal configurations allowed by Corollary \ref{cor:max_bound_critical} and Lemma \ref{lem:dim4_max_bound} are not possible: for example, $(3,1,0,\dots,0)\not\in M$ since $\norm{3b_1+b_2}_1\geq |3\norm{b_1}_1-\norm{b_2}_1|=2$, and $\{(2,1,0,\dots,0),(1,-1,0,\dots,0)\}\not\subseteq M$, since otherwise $3=\norm{3b_1}_1=\norm{(2b_1+b_2)+(b_1-b_2)}_1\leq 2$. Let us next formulate the problem of determining the densest lattice cross-polytope packing in a given dimension among lattices possessing a basis of shortest vectors as a set of nonlinear optimization problems. Let $\mathcal{M}_n$ denote a finite set containing all the possible minimal configurations of a critical lattice in $\R^n$: we require that $e_1,\dots, e_n\in M$ and that $|M|\geq \frac{n(n+1)}{2}$ for every $M\in\mathcal{M}_n$ (so only one of the pairs $\pm m$ is in $M$). Further, the sets $M\in\mathcal{M}_n$ should be consistent in the sense that the triangle inequality should not be violated, nor should the coefficient bounds from Corollary \ref{cor:max_bound_critical} (or Lemma \ref{lem:dim4_max_bound} if the additionally condition $\norm{b_i}_\infty\leq \frac{1}{2}$ is imposed) be violated.

For each $M\in \mathcal{M}_n$, we define a finite set $S_M$, which is a set for which $\norm{sB}_1\geq 1$ for all $s\in S_M$ implies $\norm{x B}_1\geq 1$ for all $x\in \Z^n\setminus\{0\}$, if $B$ is the generator matrix of a lattice with minimal configuration containing $M$. Note that $S_M$ is finite since we can always take it to be the origin-centered cube with side-length $n!$. Then the algorithm is as follows: For each $M\in \mathcal{M}_n$, solve the nonlinear optimization problem in $n^2$ variables:
\begin{align*}
    \min &\det(B)^2 \\
     \text{subject to }\norm{xB}_1&=1 \text{ for all $x\in M$}\\
     \norm{xB}_1&\geq 1 \text{ for all $x\in S_M\setminus M$. }
\end{align*}

We remark that \cite{dostert2020new} used a similar method to find locally dense packings of the $p$-ball in dimension 3. The above is a general method; for it to be applicable in practice, the number of minimal configurations should be reduced significantly, and the dimension should be quite small. This procedure, with the constraint $0\leq |B_{ij}|\leq \frac{1}{2}$, was applied in dimension four to a small subset of $\mathcal{M}_4$ consisting of elements with small coefficients. We were able to find the four-dimensional lattice shown in Table \ref{tab:lattice_cross_packings_small_dim}.

\section{Simulations}
\label{sec:simulations}

 The simulations model a SISO (single-input single-output) \linebreak Rayleigh fading wiretap channel with lattice coset coding, as described in Section \ref{sec:channel_model}. The simulations were run on the program \emph{Planewalker}, implemented by Pyrrö et al. \cite{pyrro2018planewalker}. Simulations were conducted in dimensions $n=3$ and $n=4$, with the signal constellation being $4$-PAM and $16$-PAM, respectively. The number of simulation runs was $N=3\cdot 10^6$. The first set of simulations are shown in Figures \ref{fig:dim3_l1_norm} and \ref{fig:dim4_l1_norm}. They use the same lattice as superlattice and as sublattice, with different scalings. The superlattice is scaled to unit volume, and the codebook size is $8$ and $256$, respectively.

For dimension 3, we use the orthogonal lattice $\Z^3$, the dual $L_3^\ast$ of Minkowski's \cite{minkowski} densest lattice cross-polytope packing in dimension $3$ (see Table \ref{tab:lattice_cross_packings_small_dim}) and $D_3=\{x\in\Z^3:\sum_{i=1}^3 x_i\equiv 0\pmod{2}\}$. The lattices used in dimension 4 are the orthogonal lattice $\Z^4$, a Hadamard rotation $H_4$ of $\Z^4$, $K_4$, the optimal rotation of $\Z^4$ minimizing product distance (from the table in \cite{viterbo2005table}), $L_4^*$ the dual of the densest known lattice cross-polytope packing in dimension 4 (see Table \ref{tab:lattice_cross_packings_small_dim}), and $D_4=\{x\in\Z^4:\sum_{i=1}^4 x_i\equiv 0\pmod{2}\}$. 

We further repeated the simulations in dimension 4 with a fixed superlattice: the $K_4$ lattice described earlier. This lattice was chosen, since it has a maximal minimum product distance among rotations of the $\Z^4$ lattice, hence giving a high correct decoding probability for large SNRs (for justifications, see the last paragraph of Section \ref{sec:channel_model}). The statistics of the sublattices is summarized in Table \ref{tab:simulation2data}. The lattices were found by sampling a random $4\times 4$ integer matrix with entries ranging from $-9$ to $9$, with absolute determinant equal to $256$, whence the lattice generated by this matrix is an index 256 sublattice of $\Z^4$. An exception is $\Lambda_2$, which was selected from the paper \cite{damir2021WR}. This lattice is an index 256 well-rounded (with respect to the $\ell^2$-norm) sublattice of $\Z^4$. In \cite{damir2021WR}, it is argued that $\ell^2$-norm well-rounded lattices are good candidates for lattice coset codes in wiretap channels, since the problem of minimizing the product distance can be restricted to the set of well-rounded lattices without loss of generality, and since the dual lattice of a global minimizer of the theta series is well-rounded. 
The generator matrix for each sublattice $\Lambda_i$ was rotated by $K_4$ to make it a sublattice of $K_4$. Thereafter, the $\ell^1$-minimum of the dual lattice $(\Lambda_i K_4)^*$ was computed. We selected lattices with varying dual minima to test the validity of the design criterion of Corollary \ref{cor:probability_bounds} (using the fact that the $\ell^1$-theta function is dictated by the cross-polytope packing density for small $q$). The simulation results are presented in Figure \ref{fig:sim2}.

\begin{table}[H]
    \centering
    \small
    {\renewcommand{\arraystretch}{1.4}%
    \begin{tabular}{|c|c|c|}
    \hline
        $\Lambda$ & Basis vectors &  $\lambda_1((\Lambda_iK_4)^\ast)$   \\ [0.7ex] 
        \hline
        $\Lambda_{1}$ & $(-2,5,-4,-4)$, $(2,2,-2,4)$, $(2,4,-5,-3)$, $(0,-4,1,1)$  & 1.91095  \\ [0.7ex] 
         \hline
         $\Lambda_{2}$ & $(0,2,0,-4)$, $(4,0,0,2)$, $(0,0,-4,-2)$, $(3,-3,-1,-1)$  & 1.63028  \\ [0.7ex] 
         \hline
         $\Lambda_3$ & $(0,5,0,4)$, $(-1,1,1,-2)$, $(-1,0,0,6)$, $(8,8,2,-8)$ & $1.38677$ \\[0.7ex] 
\hline
$\Lambda_{4}$ & $(3,-1,-6,-4)$, $(2,-6,-4,-2)$, $(-8,-7,-3,-4)$, $(1,-7,-6,-3)$  & 1.1595 \\ [0.7ex] 
         \hline

         $\Lambda_{5}$ & $(6,9,-6,8)$, $(1,8,1,-4)$, $(4,-2,-2,1)$, $(7,-6,-7,9)$  & 0.990213 \\ [0.7ex] 
         \hline

         $\Lambda_{6}$ & $(-5,3,4,7)$, $(1,7,4,5)$, $(6,4,2,-3)$, $(9,2,-4,-9)$  & 0.705421 \\ [0.7ex] 
         \hline
    \end{tabular}}
    \caption{Statistics for lattices used in simulation in dimension 4 with fixed superlattice.}
    \label{tab:simulation2data}
\end{table}

\begin{figure}[H]
    \centering
    \includegraphics[width=13cm]{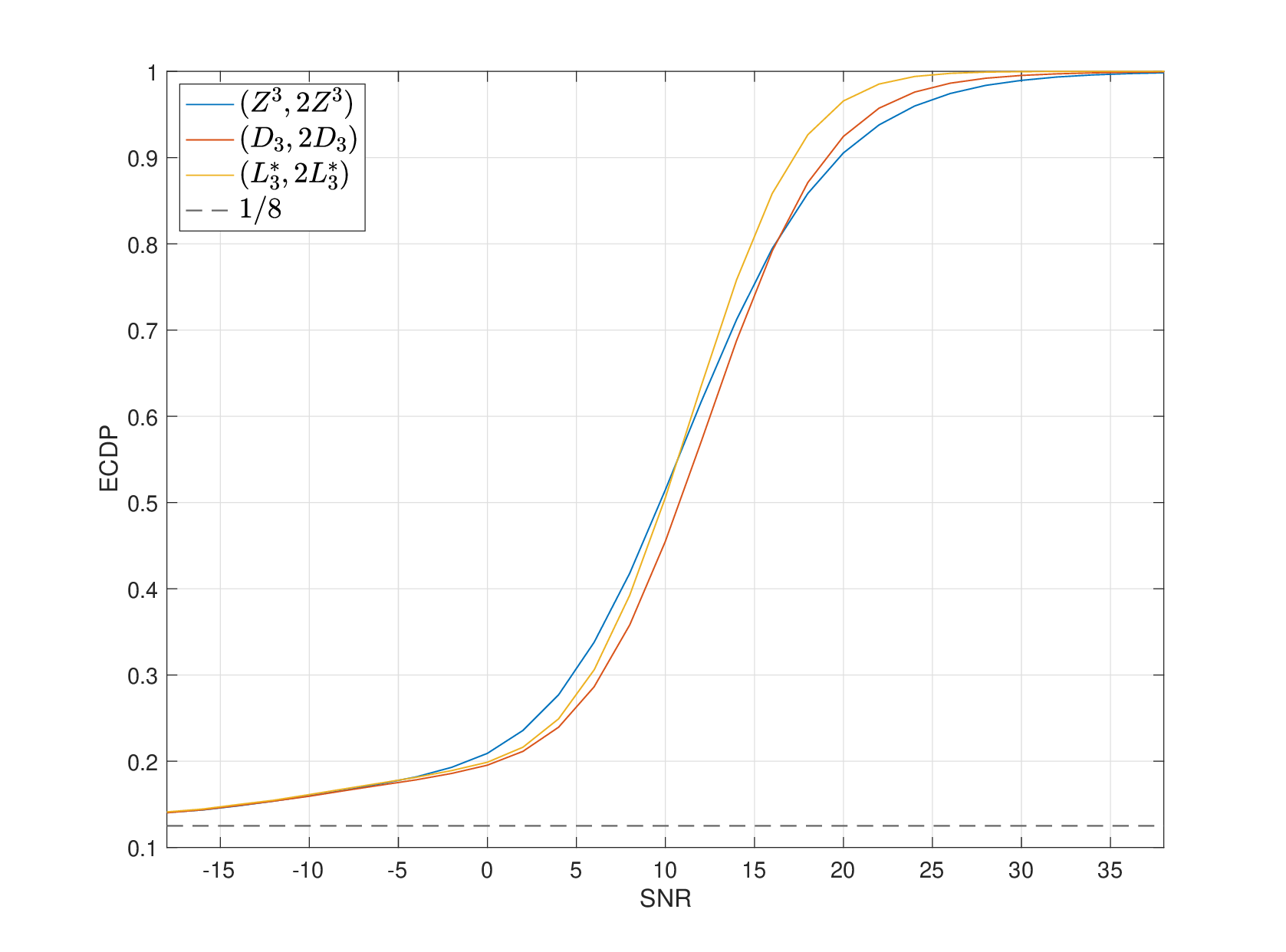}
    \caption{Simulation results in dimension 3.}
    \label{fig:dim3_l1_norm}
\end{figure}


\begin{figure}[H]
    \centering
    \includegraphics[width=12.6cm]{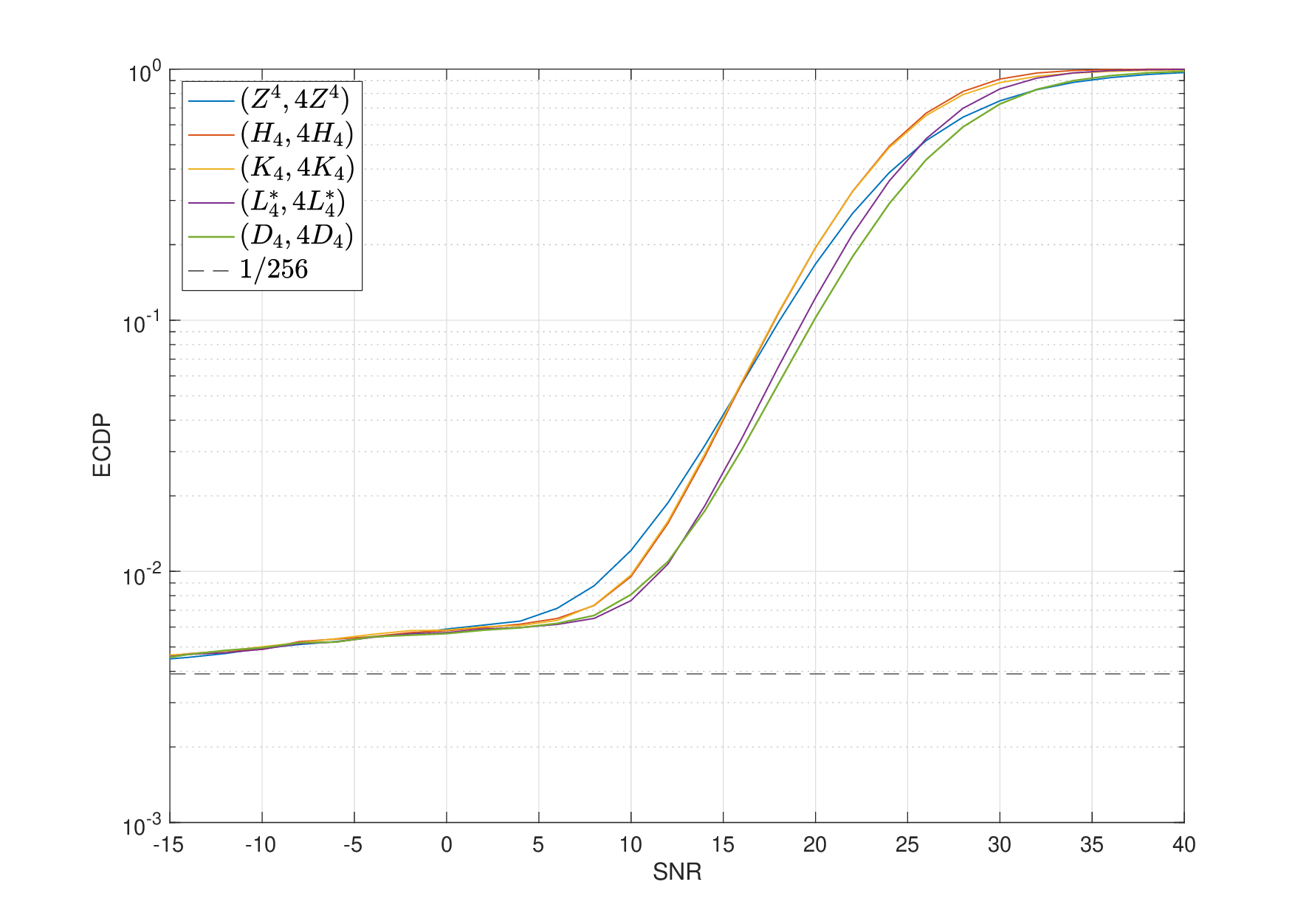}
    \caption{Simulation results in dimension 4.}
    \label{fig:dim4_l1_norm}
\end{figure}

\begin{figure}[H]
    \centering
    \includegraphics[width=12.6cm]{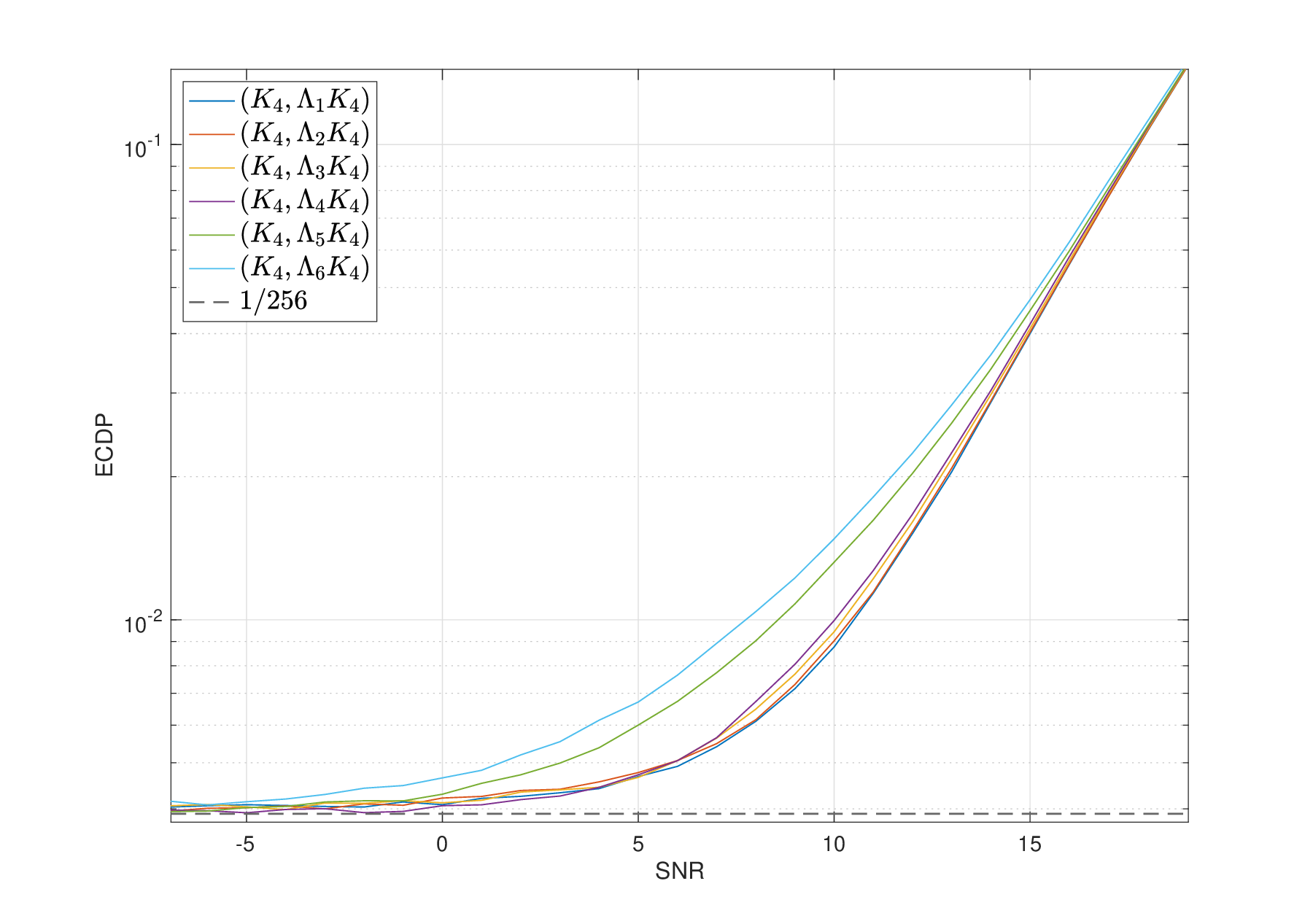}
    \caption{Simulation results in dimension 4 with $K_4$ as superlattice.}
    \label{fig:sim2}
\end{figure}\pagebreak

The simulations suggest that the $\ell^1$-minimum of the sublattice dual plays a crucial role in the performance of a given code, in the small SNR range. This is perhaps best illustrated in Figure \ref{fig:sim2}: in the SNR range $6$--$15$ dB, the lattices perform according to their dual $\ell^1$-minima. This phenomenon is also illustrated in Figure \ref{fig:dim4_l1_norm}, where the lattice pair $(L_4^*,4L_4^*)$ performs best out of the selected lattices at $\text{SNR}=10\text{ dB}$, while the pair $(\Z^4,4\Z^4)$ performs worst. For small enough SNR-values, the trend is no longer very clear. For larger SNR-values, the codes whose superlattices have large diversity and product distance (or small dual theta function, by part 1 of Corollary \ref{cor:probability_bounds}), seem to perform better. 


\section*{Acknowledgements}

We would like to thank Camilla Hollanti and Pavlo Yatsyna for fruitful discussions and comments.

\bibliographystyle{siamplain}
\bibliography{refs}

\end{document}